\definecolor{mygray}{gray}{0.85}
\renewcommand{\leq}{\leqslant}
\renewcommand{\geq}{\geqslant}
\newcommand{\mrm}[1]{\mathrm{#1}}
\def\subsection{\@startsection{subsection}{3}%
  \z@{.5\linespacing\@plus.7\linespacing}{.3\linespacing}%
  {\bfseries\centering}}
\def\subsubsection{\@startsection{subsubsection}{3}%
  \z@{.5\linespacing\@plus.7\linespacing}{.3\linespacing}%
  {\centering}}
\def\myfnt{\ifx\protect\@typeset@protect\expandafter\footnote\else\expandafter\@gobble\fi}
\newtheorem{theorem}{Theorem}[section]
\newtheorem{lemma}[theorem]{Lemma}
\newtheorem{proposition}[theorem]{Proposition}
\newtheorem{question}[theorem]{Question}
\theoremstyle{plain}
\newcommand{\leftarrowdbl}{\leftarrow\mathrel{\mkern-14mu}\leftarrow}
\newcommand{\preceqc}{\preccurlyeq}
\theoremstyle{definition}
\newtheorem{fact}[theorem]{Fact}
\newtheorem{definition}[theorem]{Definition}
\newtheorem{remark}[theorem]{Remark}
\newtheorem{observation}[theorem]{Observation}
\newtheorem{notation}[theorem]{Notation}
\newcounter{claimcounter}
\newenvironment{claim}{\stepcounter{claimcounter}{\noindent {\bf Claim \theclaimcounter.}}}{}
\newenvironment{claimproof}[1]{\noindent{{\em Proof of claim.}}\space#1}{\hfill $\rule{0.40em}{0.40em}$}
\begin{document}

\begin{abstract} 
 We prove that the epimorphism  relation   is a complete analytic quasi-order on the space of countable groups. 
  In the process, we obtain  the result of independent interest   that the epimorphism relation on pointed reflexive graphs is complete.
\end{abstract}

\title[The epimorphism relation  is a complete quasi-order]{The epimorphism relation  among    countable groups is a complete analytic quasi-order}

\thanks{S. Gao and F. Li acknowledge the partial support of their research by the Fundamental Research Funds for the Central Universities and by the National Natural Science Foundation of China (NSFC) grant 12271263. A. Nies was supported by the Marsden fund of New Zealand during the initial part of the project. 
G. Paolini was supported by project PRIN 2022 ``Models, sets and classifications", prot. 2022TECZJA and by INdAM Project 2024 (Consolidator grant) ``Groups, Crystals and Classifications''.}

\author{Su Gao}
\address{School of Mathematical Sciences and LPMC, Nankai University, Tianjin 300071, P.R. China}
\email{sgao@nankai.edu.cn}

\author{Feng Li}
\address{School of Mathematical Sciences and LPMC, Nankai University, Tianjin 300071, P.R. China}
\email{fengli@mail.nankai.edu.cn}

\author{Andr{\'e} Nies}
\address{School of Computer Science, The University of Auckland, New Zealand}
\email{andre@cs.auckland.ac.nz}

\author{Gianluca Paolini}
\address{Department of Mathematics ``Giuseppe Peano'', University of Torino, Via Carlo Alberto 10, 10123, Italy.}
\email{gianluca.paolini@unito.it}

\subjclass{Primary 03E15, 20F55; Secondary 05C60, 54H05}

\maketitle


\section{Introduction}

	The collection of groups with domain $\omega$ admits a natural topology that  makes it  a Borel space. Using the tools of invariant descriptive set theory (see~\cite{gao}), one can attempt to determine the (Borel) complexity of   equivalence relations and quasi-orders of interest on this space. Friedman and Stanley~\cite{friedman_and_stanley} proved  that the isomorphism relation on countable  groups is  Borel bireducible  with   the isomorphism  relation between   countable graphs, using a construction of   Mekler~\cite{mekler}.
	 In fact they show that the  isomorphism relation on the  class of $2$-nilpotent groups  of exponent a fixed odd prime is Borel bireducible with countable graph isomorphism. Recently, 
	 the fourth author of the present paper and Shelah~\cite{1205} proved  that the same applies to countable torsion-free abelian groups. The isomorphism relation between  finitely generated groups is a universal essentially countable Borel equivalence relation~\cite{veli}. In another direction,  Calderoni and Thomas~\cite{calderoni}  showed that the embeddability relation between countable torsion free abelian groups is a complete analytic quasi-order (and hence bi-embeddability is a complete analytic equivalence relation). Our aim is to determine the Borel complexity of a quasi-order on countable groups dual to that of embeddability:   being an epimorphic image.
	
	\smallskip
	
	Throughout, when we say a structure is countable we mean that it has   $\omega$ as its domain. Classes of structures we are interested in will  always  form a standard Borel space.  Camerlo~\cite{camerlo} showed that  the epimorphism relation on countable graphs is  a  complete   analytic quasi-order. Camerlo, Marcone and Motto Ros~\cite{camerlo2} then showed such a result for several other classes of structures. Notably,  the  case of   countable groups was   left open.
	The general complexity  of epimorphism relations on   Borel classes of countable structures is  considered in \cite[Question~6]{FMR}. 
	
	For countable groups $A,B$, we  write  $A \leftarrowdbl B$ if there is a surjective homomorphism from $B$ onto $A$; equivalently, $A$ is  isomorphic to a quotient of $B$. This relation   is easily shown to be  analytic. Our main result is that  it has  the maximum possible complexity:

	\begin{theorem}\label{th_homo_image} The relation   $\{\langle A,B \rangle  \colon A \leftarrowdbl B\}$  on the Borel space of countable   groups      is  a  complete   analytic quasi-order. Thus, the relation of bi-epimorphism on the same Borel space  is  a complete      analytic equivalence relation.
\end{theorem}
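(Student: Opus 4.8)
The plan is to prove completeness by a two-step Borel reduction, following the route announced in the abstract. First I would fix a quasi-order already known to be complete analytic, the most convenient choice being embeddability of countable graphs (equivalently, of countable trees), shown complete by Louveau and Rosendal. The goal of the first step is to Borel-reduce this to the epimorphism relation $\leftarrowdbl$ on the standard Borel space of pointed reflexive graphs; the goal of the second step is to Borel-reduce epimorphism of pointed reflexive graphs to epimorphism of countable groups. Composing the two reductions yields the theorem, and the statement about bi-epimorphism follows formally, since a complete analytic quasi-order induces a complete analytic equivalence relation via its symmetrization.

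For the graph step, I would encode each graph (or tree) $X$ as a pointed reflexive graph $\Gamma(X)$ so that a surjective graph homomorphism exists precisely when $X$ is reducible to the source in the desired combinatorial sense. The difficulty is that homomorphisms of reflexive graphs are very flexible, since reflexivity lets one collapse adjacent vertices, so one must install rigidity gadgets that pin down where things can go. The typical device is to attach to each vertex, and to the basepoint, a uniquely recognizable local configuration (for instance cliques or flowers of pairwise distinct sizes, or marked finite subgraphs) that cannot be collapsed and must therefore be carried onto the corresponding configuration in the target. Arranging the gadgets so that the only surjections realize the combinatorial relation between the underlying $X$ and $Y$, while keeping the assignment $X \mapsto \Gamma(X)$ Borel, is the technical heart of this step.

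For the group step I would use the Mekler construction \cite{mekler}, sending a graph $G$ to the $2$-nilpotent group $\mathcal{M}(G)$ of a fixed odd prime exponent $p$, generated by elements $x_v$ indexed by the vertices, with $[x_u,x_v]=1$ exactly when $u,v$ are adjacent. The point is that a vertex map $\phi$ induces a well-defined homomorphism $x_v \mapsto x_{\phi(v)}$ precisely when $\phi$ preserves the commuting relations, i.e.\ when $\phi$ is a homomorphism of reflexive graphs, the degenerate case $\phi(u)=\phi(v)$ corresponding to a loop. Thus every surjective homomorphism of pointed reflexive graphs induces, on the nose, a surjective homomorphism of the associated Mekler groups, since the images of the generators generate; the basepoint is recorded by a distinguished generator. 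This gives one direction of the reduction essentially for free.

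The main obstacle is the converse: showing that an arbitrary group epimorphism $\mathcal{M}(G) \leftarrowdbl \mathcal{M}(H)$ forces a graph epimorphism from $H$ onto $G$, so that the group relation reflects the graph relation. The Mekler theory was engineered to recover a nice graph from its group up to isomorphism and to control automorphisms, but here one must control surjections rather than isomorphisms. Concretely I would analyze the invariants preserved under a surjection, namely the derived subgroup, the center, and the centralizer lattice of the generators modulo the center, and show that, for suitably nice and suitably rigidified graphs, the generators of $\mathcal{M}(G)$ must be hit by elements that are, modulo the center and the commutator relations, images of generators of $\mathcal{M}(H)$. This would produce an onto vertex map respecting adjacency, i.e.\ a graph epimorphism. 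Making this analysis go through, in particular ruling out pathological surjections that mix generators with central or commutator elements, is where the real work lies, and it is presumably what forces the passage through \emph{pointed reflexive} graphs rather than arbitrary ones.
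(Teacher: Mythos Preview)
Your two-step architecture matches the paper's, but the second step has a genuine gap, and the first step is more complicated than necessary.

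For the first step, no rigidity gadgets are needed. The paper simply sends a countable graph $\Gamma=(V,R)$ without isolated vertices to the pointed reflexive graph $F(\Gamma)$ obtained from the \emph{complement} $(V,V^2\setminus R)$ by adjoining a fresh vertex $c$ adjacent to everything. An injective homomorphism $f\colon\Gamma\to\Delta$ yields an epimorphism $F(\Delta)\to F(\Gamma)$ by inverting $f$ on its image and sending everything else to $c$; conversely, any epimorphism $F(\Delta)\to F(\Gamma)$ must carry $c_{F(\Delta)}$ to $c_{F(\Gamma)}$ (each is the unique universally adjacent vertex, since the original graphs had no isolated vertices), and any section over $V(\Gamma)$ is the desired injective homomorphism. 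Complementation is what converts ``injective map preserving edges'' into ``surjective map preserving edges''; this is Proposition~\ref{reduction}.

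For the second step, the Mekler route does not go through as stated, and the paper abandons it for exactly the reason you flag. An epimorphism between $2$-nilpotent exponent-$p$ groups induces only a surjective $\mathbb F_p$-linear map on abelianizations; generators can be sent to arbitrary products of generators times central elements, and the centralizer lattice that drives Mekler's recovery of the graph for \emph{isomorphisms} is not preserved under quotients. You concede this is ``where the real work lies'' but provide no argument, and none is known. The paper's replacement is a Coxeter-group construction $G(C_\Gamma)$: each vertex $s$ of $\Gamma$ is blown up into four Coxeter generators spanning a copy of $S_5$, with further order-$2$ relations between these blocks encoding the edges of $\Gamma$. The crucial input is the rigidity of finite subgroups in Coxeter groups: every finite subgroup lies in a finite parabolic (Fact~\ref{finite_subgroups_fact}), and the only finite parabolics of $G(C_\Gamma)$ containing a copy of $A_5$ are the $S_5(s)$. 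One then recovers $\Gamma$ as the graph whose vertices are conjugacy classes of $S_5$-subgroups, with edges witnessed by commuting involutions in their $A_5$-parts (Lemmas~\ref{char_of_the_Alt_5}, \ref{non_conjugate_lemma}, \ref{lemma_Feng}). Since $A_5$ is simple, any group epimorphism $G(C_\Gamma)\twoheadrightarrow G(C_\Delta)$ sends each $S_5(s)$ either to a group of size at most $2$ (which one maps to the basepoint $c_\Delta$) or isomorphically onto a conjugate of some $S_5(t)$, and this assignment is the required epimorphism of pointed reflexive graphs. The parabolic-subgroup machinery and the simplicity of $A_5$ are precisely the missing ideas your Mekler sketch lacks.
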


	Our reduction proceeds in   two steps.  In the first step, we use a result of Louveau and Rosendal \cite{LR} to show that the epimorphism relation  on countable    pointed reflexive graphs  is analytic complete; here a pointed reflexive graph is one that has a distinguished vertex that is adjacent to each vertex.  In the second step, we Borel reduce this quasi-order to the epimorphism relation  on countable groups, using 
	a new group theoretic construction based on certain countably generated Coxeter groups.

We leave open the following.

\begin{question}   Is      the   epimorphism relation  a complete analytic quasi-order on  the Borel space of countable {\em abelian} groups?
\end{question}

\section{Preliminaries}\label{sec_preliminaries}

	\begin{definition} 
	\begin{enumerate}[(1)]
	\item A graph is a structure $\Gamma = (V, R)$, where $V$ is a set and $R \subseteq V^2$ is a symmetric relation which is {\em irreflexive}, i.e., for every $a \in A$ we have that $(a, a) \notin R$.
	\item A {\em reflexive} graph is a structure $\Gamma = (V, R)$ such that $R$ is symmetric and $(a, a) \in R$ for every $a \in V$.
\end{enumerate}
\end{definition}

	\begin{definition} Let $\Gamma = (V_\Gamma, R_\Gamma)$ and $\Delta = (V_\Delta, R_\Delta)$ be graphs (resp. reflexive graphs). An homomorphism of graphs $f: \Gamma \rightarrow \Delta$ is a map $f$ from $V_\Gamma$ to $V_\Delta$ such that for every $a, b \in V_\Gamma$, if $(a,  b) \in R_\Gamma$, then $(f(a), f(b)) \in R_\Delta$.
\end{definition}

	\begin{definition}\label{def_R} Let $\Gamma$ be a (possibly empty) reflexive graph. The pointed reflexive graph $\Gamma_c$ based at $\Gamma$ is a reflexive graph with an extra distinguished vertex $c = c_\Gamma$ (named by a constant) such that $c$ is connected to every vertex of $\Gamma$. A pointed reflexive graph is a structure $\Gamma_c$ as above for some (possibly empty) reflexive graph~$\Gamma$.
\end{definition}

	\begin{remark} A morphism of pointed reflexive graphs $f:\Gamma_c \rightarrow \Delta_c$ is   a morphism of reflexive graphs   sending $c_\Gamma$ to $c_\Delta$. Suppose that there is a subset $A$ of $\Gamma$ and a morphism $g: A \rightarrow \Delta$, where by $A$ (considered as a graph) we denote the induced subgraph of $\Gamma$ on vertex set $A$. Notice that, since the graph $\Gamma_c$ is reflexive and $c_\Gamma$ is connected to every element of $\Gamma$, we can define a morphism of pointed reflexive graphs $f: \Gamma_c \rightarrow \Delta_c$ by sending any $b \in \Gamma \setminus A$ to $c_\Delta$, and letting $f(a) = g(a)$ for any $a \in A$.
\end{remark}

	\begin{definition} Let $A, B$ be structures for the same signature.   An \textit{epimorphism} $f: A \twoheadrightarrow B$ is a morphism that  is surjective.
\end{definition}

\section{The epimorphism relation between pointed reflexive graphs is analytic complete}

\begin{definition}
	For   countable graphs $\Gamma$ and $\Delta$, write  $\Gamma \preceqc_1 \Delta$ if there is an \textit{injective} homomorphism from $\Gamma$ to $\Delta$.  \end{definition}

\begin{remark} \label{LRrem}
Louveau and Rosendal  \cite[Theorem 3.5]{LR}     proved that the homomorphism relation between countable graphs is a complete analytic quasi-order. They  give a Borel reduction $T \to G_T$ of their complete quasi-order  $\leq_{max} $  on normal trees (see  \cite[Definition  2.3 and above]{LR}) to it.  One observes from the proof that for each normal tree $T$, the constructed graph $G_T$ has no isolated vertices. As they remark at the end of their proof, if  $S \le_{max} T$ then $G_S$ is in fact isomorphic to an induced subgraph of $G_T$, and so in particular $G_S\preceqc_1 G_T$.  Thus, their proof    shows that \emph{the relation $\preceqc_1$ on   countable graphs with no isolated vertices is a complete analytic quasi-order.}
\end{remark}

  Recall   the definition of a pointed reflexive graph from Definition~\ref{def_R}.

 \begin{definition}	For countable reflexive graphs $\Gamma$ and $\Delta$, we write  $\Gamma\leftarrowdbl \Delta$ if there is an onto homomorphism from $\Delta$ to $\Gamma$. 
\end{definition}

\begin{proposition}\label{reduction}
	There is a Borel map $F$ from the class of countable   graphs without isolated vertices to the class of countable pointed reflexive graphs, such that for any countable connected graphs $\Gamma$ and $\Delta$, we have
	$$\Gamma \preceqc_1 \Delta \iff F(\Gamma) \leftarrowdbl F(\Delta). $$
	Thus, the relation $\leftarrowdbl$ between  countable pointed reflexive graphs is a complete analytic quasi-order using  Remark~\ref{LRrem}.
\end{proposition}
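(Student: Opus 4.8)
The plan is to take $F(\Gamma)$ to be the pointed reflexive graph built on the \emph{reflexive complement} of $\Gamma$. Explicitly, for a countable graph $\Gamma=(V_\Gamma,R_\Gamma)$ without isolated vertices let $\Gamma^{\mathrm{co}}$ be the reflexive graph $(V_\Gamma,\,V_\Gamma^2\setminus R_\Gamma)$ (the usual complement of $\Gamma$ with all loops added), and put $F(\Gamma):=(\Gamma^{\mathrm{co}})_{c}$, the pointed reflexive graph based at $\Gamma^{\mathrm{co}}$ in the sense of Definition~\ref{def_R}, with distinguished vertex $c_\Gamma$ joined to every vertex. This assignment is patently Borel and lands in the countable pointed reflexive graphs, so everything reduces to verifying the displayed equivalence. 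The guiding idea is that complementation turns the edge-\emph{preserving} condition defining $\preceqc_1$ into its contrapositive, which is exactly what is needed to push an injective homomorphism and an epimorphism past one another; the distinguished vertex $c$, being adjacent to everything, will serve as a sink absorbing all the collapsing that a homomorphism of reflexive graphs is free to perform.

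For the forward implication, suppose $g\colon\Gamma\to\Delta$ is an injective homomorphism. I would define $h\colon F(\Delta)\to F(\Gamma)$ by $h(c_\Delta)=c_\Gamma$, $h(g(w))=w$ for $w\in V_\Gamma$, and $h(v)=c_\Gamma$ for every $v\in V_\Delta\setminus g(V_\Gamma)$; this is surjective and sends $c_\Delta$ to $c_\Gamma$, so it only remains to check it is a morphism. The one nontrivial case is a non-loop edge of $\Delta^{\mathrm{co}}$ between two image points $g(u),g(v)$: such an edge means $g(u)\not\sim_\Delta g(v)$, and the contrapositive of the implication defining a homomorphism (applied to $g$) gives $u\not\sim_\Gamma v$, i.e. $u\sim_{\Gamma^{\mathrm{co}}}v$, which is exactly $h(g(u))\sim_{\Gamma^{\mathrm{co}}}h(g(v))$. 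Every other edge of $F(\Delta)$ has at least one endpoint sent to $c_\Gamma$ (or is a loop), and since $c_\Gamma$ is adjacent to all vertices these are preserved automatically; this is precisely the extension principle recorded in the Remark following Definition~\ref{def_R}. Hence $h$ witnesses $F(\Gamma)\leftarrowdbl F(\Delta)$.

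For the converse, suppose $\psi\colon F(\Delta)\twoheadrightarrow F(\Gamma)$ is an epimorphism of pointed reflexive graphs, so $\psi(c_\Delta)=c_\Gamma$. By surjectivity each fibre $\psi^{-1}(w)$ for $w\in V_\Gamma$ is nonempty, and since $\psi(c_\Delta)=c_\Gamma\ne w$ it is contained in $V_\Delta$; I choose a representative $r(w)\in\psi^{-1}(w)$. The map $r\colon\Gamma\to\Delta$ is injective because distinct vertices have disjoint fibres, and I claim it is a homomorphism. Indeed, if $w_1\sim_\Gamma w_2$ with $w_1\ne w_2$ but $r(w_1)\not\sim_\Delta r(w_2)$, then (as $r(w_1)\ne r(w_2)$) the pair $r(w_1),r(w_2)$ is a non-loop edge of $\Delta^{\mathrm{co}}$; applying $\psi$ (which carries edges to edges, loops included) and using $\psi(r(w_i))=w_i$ with $w_1\ne w_2$ yields a non-loop edge $w_1\sim_{\Gamma^{\mathrm{co}}}w_2$, i.e. $w_1\not\sim_\Gamma w_2$, a contradiction. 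Thus $r(w_1)\sim_\Delta r(w_2)$ whenever $w_1\sim_\Gamma w_2$, so $r$ is an injective homomorphism and $\Gamma\preceqc_1\Delta$.

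The main obstacle is this converse direction: a homomorphism of reflexive graphs may collapse arbitrary sets of pairwise non-adjacent vertices (the loops make such collapsing free), so a priori the chosen representatives $r(w)$ carry no information and need not span the edges of $\Gamma$. What rescues the argument is exactly the passage to the complement: a \emph{missing} edge among representatives becomes an \emph{actual} non-loop edge of $\Delta^{\mathrm{co}}$, and a genuine edge between distinct vertices \emph{cannot} be collapsed by $\psi$ — its image is again a genuine edge — so $\psi$ is forced to reflect the edges of $\Gamma$ on representatives. Neither direction uses connectivity or the absence of isolated vertices; these hypotheses enter only through Remark~\ref{LRrem}, which supplies a complete analytic quasi-order $\preceqc_1$ already concentrated on connected graphs without isolated vertices. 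Composing that completeness with the Borel reduction $F$ just constructed shows that $\leftarrowdbl$ on countable pointed reflexive graphs is a complete analytic quasi-order.
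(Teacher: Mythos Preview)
Your construction and both directions of the equivalence are exactly the paper's: $F(\Gamma)$ is the pointed reflexive complement, the forward map sends everything outside the range of $g$ to $c$, and the converse picks a section of the fibres and uses the contrapositive of the homomorphism condition on the complement. The one small divergence is in how $c_\Delta\mapsto c_\Gamma$ is obtained in the converse: you invoke the definition of a morphism of \emph{pointed} structures, whereas the paper derives it structurally from the fact that (thanks to the no-isolated-vertices hypothesis) $c$ is the unique vertex adjacent to all others --- this is why the paper actually uses that hypothesis in the proof, while you correctly observe that under the pointed-morphism reading it is only needed to feed into Remark~\ref{LRrem}.
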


\begin{proof}
	If $\Gamma=(V,R_\Gamma)$ is a countable    graph without isolated vertices, we let  $F(\Gamma)= \Delta_c$ according to Definition~\ref{def_R}, where $\Delta $ is the graph   $(V, V^2-R_\Gamma)$ and $c = c_{F(\Gamma)}$ is a fresh vertex. Thus, 
	$$F(\Gamma)=\left(V\cup \{c\}, (V^2\setminus R_\Gamma) \cup (\{c\}\times V)\cup (V\times \{c\})\cup \{(c,c)\} \right). $$
  Then $F(\Gamma)$ is a pointed reflexive graph (since $\Gamma$ is irreflexive by assumption) with $c$ as the distinguished vertex. Since $\Gamma$ has no isolated vertices,  $c$ is the only vertex in $F(\Gamma)$ which is adjacent to every vertex of $F(\Gamma)$. It is easily seen that the map $F$ is Borel. We prove that $F$ is a reduction from $\preceqc$ to $\leftarrowdbl$. 
	
	\smallskip \noindent   Suppose first that $f\colon \Gamma\to \Delta$ is an injective homomorphism. We construct an epimorphism $g$ from $F(\Delta)$ to $F(\Gamma)$ as follows. Fix $y\in F(\Delta)$. If $y\notin f(\Gamma)$, we set $g(y)= c_{F(\Gamma)}$. If $y=f(x)$ for some $x\in \Gamma$, then we set $g(y)=x$. Since $f$ is injective, $g$ is well defined. Clearly $g$ is an onto map, and $g(c_{F(\Delta)})=c_{F(\Gamma)}$. To see that $g$ is a homomorphism, consider $y,y'\in F(\Delta)$. Since both $F(\Gamma)$ and $F(\Delta)$ are reflexive graphs we may  assume without loss of generality that $y \neq y'$. 
	If either of $y, y'$  is not in the range of $f$, then its image under $g$ will be $c_{F(\Gamma)}$, which is adjacent to every vertex in $F(\Gamma)$ by definition. Thus we may assume $y=f(x)$ and $y'=f(x')$ for  distinct vertices $x , x'$ of $\Gamma$. 
	Then $g(y)=x$ and $g(y')=x'$.  Suppose  that $y$ and $ y'$ are adjacent in $F(\Delta)$. If $x$ and $x'$ are not adjacent in $F(\Gamma)$, then by   definition  they are adjacent in $\Gamma$. Thus, since $f$ is an homomorphism, $y, y'$ are adjacent in $\Delta$,  contradiction. Thus $g$ is a homomorphism as required.

	\smallskip \noindent    Conversely, suppose that $g\colon F(\Delta)\to F(\Gamma)$ is an epimorphism. We construct an injective homomorphism from $\Gamma$ to $\Delta$. Before doing this, we note that, since $c_{F(\Gamma)}$ is the only vertex in $F(\Gamma)$ adjacent to every vertex in $F(\Gamma)$, and similarly for $F(\Delta)$, we must have $g(c_{F(\Delta)})=c_{F(\Gamma)}$. Now fix a vertex $x$ of $\Gamma$. Since $g$ is onto and $g(c_{F(\Delta)})=c_{F(\Gamma)}$, there is a vertex $y$ of $\Delta$ such that $g(y)=x$. Choose as  $f(x)$     any such $y$. Then $f$ is injective. We verify  that $f$ is a homomorphism. Let $x,x'$ be adjacent vertices of $ \Gamma$ (so necessarily $x \neq x'$, since $\Gamma$ is  irreflexive).  If    $f(x)$ is not adjacent to $f(x')$ in $\Delta$, then they are adjacent in $F (\Delta)$. Since $g$ is a homomorphism this means $x=g(f(x))$ is adjacent to $x'= g(f(x'))$ in $F(\Gamma)$, contradiction. 
\end{proof}

\section{The group theoretic construction}

\begin{definition}
A \textit{Coxeter graph} $\Delta = (S, R) $  is  a reflexive, undirected, \textit{weighted}   graph with edges $\{s, t\}$   labeled by numbers $m(s, t) \in  \mathbb N \setminus \{0\}$, with $m(s,t) = m(t,s)$ for each $s,t$, and  $m(s,t) = 1 $ iff $s=t$. To   such a  graph one associates a group $G= G(\Delta)$, called the \textit{Coxeter group of type}~$\Delta$. It is given by the presentation
\[ \langle S  \mid   (st)^{m(s,t)} = e \text{ for } s, t \in S, \;sRt  \rangle. \]
One says that $(G, S)$ is  a \textit{Coxeter system}  of type $\Delta$.
\end{definition}  
 Note that each $s \in S$ is an involution in $G(\Delta)$, and that $m(s,t)=2$ means that $s$ and $t$ commute. 
 \begin{definition}
 	Let $(G, S)$ be a Coxeter system.  A  finite subgroup $U$  of $G$ is called \textit{special} if  it is generated by $U \cap S$. Conjugates of special subgroups are called \textit{parabolic}. 
 \end{definition}

\begin{fact}[{\cite[Proposition~2.87]{abramenko}; cf. \cite[Theorem 12.3.4]{davis}}]\label{finite_subgroups_fact} Let $(G, S)$ be a Coxeter system. Then any finite subgroup $A$  of $G$ is contained in a finite parabolic subgroup of $G$. 
\end{fact}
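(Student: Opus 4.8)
The plan is to realize $G$ as a group of isometries of a geometric object on which the finite subgroup $A$ is forced to have a fixed point whose stabilizer is manifestly a finite parabolic subgroup. The cleanest route available at this stage is the Tits (geometric) representation together with an averaging argument. Before setting this up I would reduce to finite rank. Since $A$ is finite, each of its finitely many elements is a word in finitely many generators, so only finitely many generators $T \subseteq S$ occur altogether; hence $A \leq G_T$, the special subgroup generated by $T$. As $(G_T, T)$ is itself a Coxeter system and every parabolic subgroup of $(G_T, T)$ is again parabolic (and of the same cardinality) in $(G, S)$, it suffices to prove the statement for $(G_T, T)$. I may therefore assume $|S| < \infty$, which places me in the setting of the finite-dimensional geometric representation.

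Next I would invoke the standard theory of the Tits cone. The group $G$ acts faithfully on the finite-dimensional real vector space $V = \mathbb{R}^S$ preserving the bilinear form $B$ associated with the labels $m(s,t)$, and dually on $V^*$ one obtains the Tits cone $U$. The facts I would quote are: $U$ is a convex, $G$-invariant cone with nonempty interior $U^\circ$; the closure of the fundamental chamber is a strict fundamental domain; and the stabilizer in $G$ of a point of $U$ is the parabolic subgroup generated by the reflections fixing that point, which is finite precisely when the point lies in $U^\circ$.

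With this input the argument is short. I would fix any point $x_0 \in U^\circ$ and form the center of mass $\bar{x} = |A|^{-1}\sum_{a \in A} a \cdot x_0$. A reindexing of the sum shows that $\bar{x}$ is fixed by every element of $A$. Moreover each $a \cdot x_0$ lies in $U^\circ$, since $A$ acts by linear automorphisms preserving $U$ and hence its interior; as $U^\circ$ is convex, the convex combination $\bar{x}$ again lies in $U^\circ$. Therefore the stabilizer $G_{\bar{x}}$ is a finite parabolic subgroup, and by construction it contains $A$, which completes the proof.

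I expect the main obstacle to be expository rather than substantive: assembling the precise facts about the Tits cone (its convexity, the identification of point stabilizers as parabolic subgroups, and the characterization of finite stabilizers as those of interior points) in a self-contained way. The one genuine verification is that the averaged point does not leave the interior of the cone, which follows from convexity of $U^\circ$ together with the $G$-invariance of $U^\circ$. An equally viable alternative would replace the Tits cone by the Davis complex, a complete CAT(0) space on which $G$ acts properly with finite parabolic cell-stabilizers; the Bruhat--Tits fixed-point theorem then yields a point fixed by $A$, namely the circumcenter of a bounded $A$-orbit, whose stabilizer is the desired finite parabolic subgroup.
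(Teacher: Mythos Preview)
The paper does not supply a proof of this statement at all: it is recorded as a Fact with a citation to \cite[Proposition~2.87]{abramenko}, so there is nothing in the paper to compare your argument against line by line.

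That said, your proposal is correct and is in fact essentially the classical proof one finds in the cited source (and in Humphreys, Bourbaki, etc.): reduce to finite rank, act on the dual via the contragredient of the geometric representation, pick a point in the interior of the Tits cone, average over the $A$-orbit, and observe that the barycentre stays in the interior by convexity and $G$-invariance of $U^\circ$, so its stabilizer is a finite parabolic containing $A$. Your alternative via the Davis complex and the Bruhat--Tits fixed-point theorem is also standard and equally valid. The only point worth tightening in exposition is the assertion that point stabilizers in the interior of the Tits cone are exactly the \emph{finite} parabolics; this is true but is the substantive input from the theory, so in a self-contained write-up you would want to cite it precisely rather than fold it into the list of ``facts I would quote.''
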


Let  $L_4$ (usually denoted $A_4$ in the literature) be the weighted graph on four vertices $s_1, ..., s_4$ such that, for $i , j \in \{1, 2, 3, 4\}$, $m(s_i, s_{j}) = 3$ if $|j-i|=1$, and $m(s_i, s_{j}) = 2$ if  $|j - i|> 1$.  The standard presentation of $S_n$, the symmetric group of $n$ elements, with   transpositions of adjacent elements  as generators yields:
	\begin{observation}\label{S_4_obs} The symmetric group on $5$ elements $S_5$ is isomorphic to $G(L_4)$. 
\end{observation}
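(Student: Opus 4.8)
The plan is to write down the isomorphism explicitly, using the fact that $L_4$ is precisely the Coxeter diagram whose defining relations coincide with the classical Moore presentation of $S_5$ by adjacent transpositions. Concretely, I would define a homomorphism $\phi \colon G(L_4) \to S_5$ by setting $\phi(s_i) = (i,\, i+1)$ for $i = 1, 2, 3, 4$. The first step is to check that $\phi$ respects the defining relations of $G(L_4)$, so that it is well defined: each adjacent transposition is an involution, giving $s_i^2 = e$; two consecutive adjacent transpositions generate a copy of $S_3$ in which their product is a $3$-cycle, giving $(s_i s_{i+1})^3 = e$; and two transpositions with disjoint supports commute, giving $(s_i s_j)^2 = e$ whenever $|i - j| > 1$. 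Since these are exactly the relations imposed by the weighting of $L_4$ (recall $m(s_i, s_i) = 1$, $m(s_i, s_{i+1}) = 3$, and $m(s_i, s_j) = 2$ for $|i - j| > 1$), the map $\phi$ is a well-defined group homomorphism.

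Surjectivity of $\phi$ is immediate from the classical fact that the adjacent transpositions $(1,2), (2,3), (3,4), (4,5)$ generate the whole symmetric group $S_5$: every transposition, and hence every permutation, is a product of adjacent ones.

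The remaining, and main, step is injectivity, for which it suffices to prove the order bound $|G(L_4)| \leq 120 = |S_5|$. The standard route is a coset enumeration carried out by induction on the length of the $A$-type diagram. Writing $H = \langle s_1, s_2, s_3 \rangle$, the induced subdiagram on $\{s_1, s_2, s_3\}$ is of type $A_3$, so by the analogous statement for $S_4$ one has $|H| \leq 24$. One then checks that the five elements $e,\ s_4,\ s_4 s_3,\ s_4 s_3 s_2,\ s_4 s_3 s_2 s_1$ represent all right cosets of $H$ in $G(L_4)$, by verifying that the corresponding set of cosets is closed under right multiplication by each generator $s_1, s_2, s_3, s_4$ (using the relations to reduce). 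This gives $[G(L_4) : H] \leq 5$ and hence $|G(L_4)| \leq 5 \cdot 24 = 120$. Together with surjectivity, the order bound forces $\phi$ to be an isomorphism.

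The main obstacle is exactly this last order estimate: the relation check and surjectivity are essentially immediate, whereas ruling out any further collapse in the abstractly presented group $G(L_4)$ requires the coset computation above. Alternatively, one may simply invoke the general classification result that the Coxeter group of type $A_{n-1}$ is isomorphic to $S_n$ --- the present statement being the case $n = 5$ --- in which case nothing further remains to prove.
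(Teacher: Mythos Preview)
Your proposal is correct and aligns with the paper's approach: the paper treats this as an immediate consequence of the standard presentation of $S_n$ by adjacent transpositions (the Coxeter system of type $A_{n-1}$), stating it without further argument. Your write-up simply spells out the details --- the explicit map $s_i \mapsto (i,i+1)$, the relation check, surjectivity, and the order bound via coset enumeration --- and you yourself note at the end that one may just invoke the $A_{n-1} \cong S_n$ classification, which is exactly what the paper does.
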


	\begin{definition}\label{the_crucial_def} \label{Alt_notation}  Given a graph $\Gamma=(S,E)$  we define a  Coxeter graph $C_\Gamma$ as follows. 
	\begin{enumerate}[$\bullet$]
	 \item We replace each $s\in S$ by vertices  $s_1, ..., s_4$. \item We add labelled edges between them to ensure that the subgraph induced on   $ s_1, \ldots, s_4$ is $L_4$.
	 \item For distinct  $s,t \in S$,  we add an edge labelled~$2$  between $s_4$ and $t_4 $.
     \item For distinct $s,t \in S$ such that $s$ and $t$ are adjacent in $\Gamma$ and  $i= 1,3$,  we add edges labelled~$2$  between $s_i$ and $ t_i $ and $s_i $ and $t_{4-i}$.  
	 \end{enumerate}
		
\noindent	By $S_5(s)$ we denote the subgroup $\langle s_1, ..., s_4 \rangle_{G(C_\Gamma)} \cong G(L_4) \cong S_5$ and by $A_5(s)$ we denote the alternating subgroup of  index $2$ in  $S_5(s)$.
	 \end{definition}

	\begin{notation} For $(G, S)$ a Coxeter system and $J \subseteq S$, we let $G_J = \langle J \rangle_G$.
\end{notation}

\begin{remark}\label{remark_commuting_involutions}
 Given a graph $\Gamma=(S,E)$  for each   $s\in S $  we have   $s_1s_3\in A_5(s)$, and $s_1s_3$  is an involution. Furthermore, if $s E t$, then $G(C_\Gamma) \models [s_1 s_3, t_1t_3  ]=e$.
\end{remark}

	\begin{lemma}\label{char_of_the_Alt_5} Let $G = G(C_\Gamma)$ be as in Definition~\ref{the_crucial_def}. 
\begin{enumerate}[(1)]
	\item Each subgroup $A \cong A_5$ of $G$ is conjugate to $ A_5 (s) $ for some  vertex $s$ of $ \Gamma$.
	\item Each subgroup $B \cong S_5$ of $G$ is conjugate to $S_5 (s) $ for some  vertex $s$ of $ \Gamma$.
	\end{enumerate}
\end{lemma}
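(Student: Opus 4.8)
The plan is to prove both parts simultaneously by exploiting the structure theory of finite subgroups of Coxeter groups provided by Fact~\ref{finite_subgroups_fact}. Given a subgroup $A \cong A_5$ (or $B \cong S_5$), the first step is to observe that $A$ is finite, hence by Fact~\ref{finite_subgroups_fact} it is contained in some finite parabolic subgroup $P = g G_J g^{-1}$ of $G$, where $G_J$ is a finite special subgroup generated by a set $J \subseteq S$ of Coxeter generators of $C_\Gamma$. After conjugating by $g^{-1}$, I may assume $A$ is contained in the finite special subgroup $G_J$ itself; it then suffices to prove the statement for subgroups of finite special subgroups and track the conjugation at the end.

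The heart of the argument is therefore a classification of the finite special subgroups $G_J$ of $G(C_\Gamma)$. The second step is to determine, from the labelled-edge structure of $C_\Gamma$ described in Definition~\ref{the_crucial_def}, exactly which subsets $J \subseteq S(C_\Gamma)$ generate finite subgroups. A special subgroup $G_J$ is finite precisely when the Coxeter diagram induced on $J$ is a disjoint union of finite-type (spherical) Coxeter diagrams. I would read off the induced diagrams: within a single block $\{s_1,s_2,s_3,s_4\}$ one gets the diagram $L_4$ giving $S_5$; the edges labelled $2$ between different blocks contribute only commuting (direct-product) relations. The key combinatorial point is to verify that, because all inter-block edges carry label $2$, every finite special subgroup decomposes as a direct product of finite special subgroups coming from individual blocks, i.e.\ $G_J \cong \prod_i G_{J_i}$ where each $J_i \subseteq \{s_1,\dots,s_4\}$ for a single vertex $s$ of $\Gamma$.

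The third step is the representation-theoretic core: I must show that the only copies of $A_5$ and $S_5$ inside such a direct product $\prod_i G_{J_i}$ of small Coxeter groups (subgroups of $S_5$) are, up to conjugacy, the diagonal-free ones sitting inside a single factor as the full $A_5(s)$ or $S_5(s)$. Here I would use that $A_5$ is simple and nonabelian, so any homomorphism from $A_5$ into a direct product projects to a subgroup isomorphic to $A_5$ or to the trivial group in each factor; simplicity forces the image to live in exactly one factor (a diagonal embedding into two factors would require two isomorphic simple factors, and I must check the possible factors of $G_{J_i}$ admit no such pair or no diagonal of the right order). Since each factor is a subgroup of $S_5$ and the only subgroup of $S_5$ isomorphic to $A_5$ is $A_5$ itself, the copy of $A_5$ must equal $A_5(s)$ for a single vertex $s$. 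The $S_5$ case follows because any $S_5 \leq G$ contains its derived subgroup $A_5$, which by part~(1) is conjugate to some $A_5(s)$; conjugating, I reduce to analyzing subgroups of $G$ containing $A_5(s)$ and having order $120$, and I check that within the relevant direct product the normalizer structure forces such a subgroup to be exactly $S_5(s)$.

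The main obstacle I anticipate is the third step, specifically ruling out \emph{diagonal} embeddings of $A_5$ across two or more direct factors and controlling the centralizer/normalizer so that $S_5(s)$ is pinned down uniquely rather than merely up to the extra involutions $s_1 s_3$ that commute across adjacent blocks (Remark~\ref{remark_commuting_involutions}). The commuting involutions $s_1 s_3$ are exactly the data that could in principle allow an unexpected copy of $A_5$ or $S_5$ to leak into a product of two blocks, so the decisive calculation is to show that no such diagonal copy has the correct isomorphism type; establishing that the inter-block label-$2$ edges, combined with the intra-block $L_4$ geometry, produce only direct products of factors that are too small (or pairwise non-isomorphic in the relevant way) to host a diagonal $A_5$ will be where the real work lies.
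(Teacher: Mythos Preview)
Your approach is essentially the same as the paper's: reduce via Fact~\ref{finite_subgroups_fact} to finite special subgroups, then classify those. Your execution is correct but more elaborate than necessary, and the ``main obstacle'' you identify is in fact a non-issue.

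The paper's argument is shorter because it classifies the finite special subgroups $G_T$ with $|G_T|\geq 60$ directly: either $T\subseteq\{s_1,s_3: s\in K\}$ for a clique $K$, or $T\subseteq\{s_4:s\in K\}$, or $T=\{s_1,\dots,s_4\}$ for a single vertex~$s$. In the first two cases $G_T$ is abelian (all labels are~$2$), so it cannot contain~$A_5$; in the third case $G_T=S_5(s)$, which contains a \emph{unique} copy of~$A_5$. This finishes~(1), and~(2) follows immediately.

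Your direct-product decomposition $G_J\cong\prod_i G_{J_i}$ is correct, but the diagonal worry dissolves once you notice that a factor $G_{J_i}$ can contain $A_5$ only if $J_i=\{s_1,s_2,s_3,s_4\}$ (any proper parabolic of $S_5$ has order $\leq 24$), and in that case $J=J_i$: the generator $s_2$ has $m(s_2,x)=\infty$ for every $x$ outside its own block, so no other block can contribute to a finite~$J$. Hence there is never a second factor for a diagonal copy to land in, and your third step collapses to the paper's one-line observation. The commuting involutions of Remark~\ref{remark_commuting_involutions} live in the abelian special subgroups and pose no threat here.
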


	\begin{proof} Item (2) follows from (1). We prove (1).	 By Fact~\ref{finite_subgroups_fact},   $A$ is contained in a finite parabolic subgroup of $G$.  We analyse what the corresponding special subgroup can be. Let $V$ denote the vertex set of   $C_\Gamma $.  First note that if $u, v\in V$ and there is no edge between $u$ and $v$ in $C_\Gamma$, then $\langle u, v\rangle_G$ is isomorphic to $\mathbb{Z}/2\mathbb{Z}*\mathbb{Z}/2\mathbb{Z}$, which is infinite. Let $T\subseteq V$ be a finite set generating a special group which contains a conjugate of $A$. Then $G_T=\langle T\rangle_G$ is finite and contains at least 60 elements. An analysis of the situation gives that $T$ must be contained 
	in a set of the form $\{s_1, s_3 : s \in K\}$ or $\{s_4 : s \in K \}$ with $K$ a clique of $\Gamma$, or $T = \{s_1, ..., s_4\}$, for some $s \in S$.
	In the first two  cases  $G_T$  is abelian.  In the latter  case $G_T \cong  S_5$.  So
		  only in the third case the  special  subgroups of $G$  can  contain copies of $A_5$.  Furthermore, any such subgroup  contains exactly one copy of $A_5$. So by Fact~\ref{finite_subgroups_fact},   $A $ is conjugate to  $A_5(s)$ for some $s \in \Gamma$, as required.
\end{proof}

	In order to prove Lemma~\ref{non_conjugate_lemma} we need some  facts from the theory of Coxeter groups, so from Definition~\ref{def_op} to Fact~\ref{krammer_fact} we introduce what is necessary in order to prove Lemma~\ref{non_conjugate_lemma}.
	
	\begin{definition}\label{def_op} Given a Coxeter graph $\Gamma$ we denote by $\Gamma^{\mrm{op}}$ the graph which replaces the edges labelled by $2$ of $\Gamma$ with non-edges and the non-edges of $\Gamma$ with edges labelled by $\infty$.
\end{definition}


	\begin{definition}[{\cite[Definition 4.5.1]{davis}}]\label{def_PI} Let $(G, S)$ be a Coxeter system and let $T\subseteq S$. The length of an element $w\in G$ is the length of a shortest expression of $w$ as a product of elements of $S$. An element $w\in G$ is $(T, \emptyset)$-reduced if it is an element of minimum length in the coset $G_Tw$. We define the fundamental $T$-sector in $G$ by 
$$\Pi_T = A_T := \{w \in G : w \text{ is } (T, \emptyset)\text{-reduced}\}.$$
\end{definition}

By \cite[Lemma 4.3.1]{davis}, each right coset of $G_T$ contains a unique $(T,\emptyset)$-reduced element. Thus the fundamental $T$-sector in $G$ is a set of representatives for the right cosets of $G_T$.

\begin{definition}\label{def_long} Let $(G, S)$ be a Coxeter system. If $I \subseteq S$ and $G_I$ is finite, then $G_I$ has a unique element of maximum length (\cite[Lemma 4.6.1]{davis}). 
This element is called the longest element of $(G_I , I)$ and it is denoted by $w_I$.
\end{definition}
	
	\begin{definition}[{\cite[Definition 4.10.3]{davis}}]\label{def_K} Let $(G, S)$ be a Coxeter system of type $\Gamma$.
Let $I \subseteq S$, $s \in S \setminus I$. Write $K$ for the connected component of $I \cup \{s\}$ containing $s$ in the graph $\Gamma^{\mrm{op}}$. If $G_K$ is finite, 
then we define
\[
\nu(I, s) = w_{K \setminus \{s\}} w_K.
\]
Otherwise, $\nu(I, s)$ is not defined. Whenever $\nu(I, s)$ is defined, we have
\[
\nu(I, s)^{-1} \Pi_I = \Pi_J
\]
for some $J = (I \cup \{s\}) \setminus \{t\}$, $t \in K$ (\cite[Lemma 4.10.4]{davis}). We then define the directed labelled graph $\mathcal{K}_S$ whose vertices are the subsets of $S$ and in which there is an edge from $I$ to $J$, labelled $s$, each time $\nu(I, s)$ exists, and $\nu(I, s)^{-1} \Pi_I = \Pi_J$.
 \end{definition}

\begin{fact}[{\cite[Theorem 4.1.6 (i)]{davis}}; cf. {\cite[2.4.1(i)]{brenti}}] 
Let $(G, S)$ be a Coxeter system and let $J \subseteq S$. Then $(G_J, J)$ is a Coxeter system.
\end{fact}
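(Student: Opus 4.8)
The plan is to show that the evident surjection from the abstract Coxeter group on $J$ onto $G_J$ is in fact an isomorphism, by exploiting the faithfulness of the canonical (Tits) geometric representation. Write $\widehat{G_J}$ for the Coxeter group given by the presentation $\langle J \mid (st)^{m(s,t)} = e,\ s,t \in J\rangle$, with the labels $m(s,t)$ inherited from $(G,S)$ (a missing edge meaning, as usual, that $st$ has infinite order). Since every such relation already holds in $G$, the elements of $J \subseteq G$ satisfy the defining relations of $\widehat{G_J}$, so the universal property of the presentation yields a homomorphism $\phi\colon \widehat{G_J}\to G$ with image $G_J$. The map $\phi$ is visibly surjective onto $G_J$; the whole content of the statement is that $\phi$ is injective, i.e. that passing to the subgroup generated by $J$ introduces no relations beyond those dictated by the labels $m(s,t)$ for $s,t\in J$. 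This injectivity is the step I expect to be the crux.

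To establish it I would use the canonical representation. Recall that $(G,S)$ acts faithfully on the real vector space $V = \bigoplus_{s\in S}\mathbb{R}e_s$ equipped with the symmetric bilinear form $B$ determined by $B(e_s,e_t) = -\cos\!\big(\pi/m(s,t)\big)$, each generator $s$ acting as the reflection $v\mapsto v - 2B(e_s,v)e_s$; this faithfulness is the one genuinely nontrivial input, and it is a standard theorem of Tits. Now consider the coordinate subspace $V_J = \bigoplus_{s\in J}\mathbb{R}e_s$. For $s,t\in J$ one has $\sigma_s(e_t) = e_t - 2B(e_s,e_t)e_s \in V_J$, so $V_J$ is invariant under the action of $G_J$, and the matrix of $B|_{V_J}$ in the basis $\{e_s : s\in J\}$ is exactly $\big(-\cos(\pi/m(s,t))\big)_{s,t\in J}$, which is precisely the form defining the canonical representation of $\widehat{G_J}$.

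Consequently the canonical representation $\widehat{\rho}\colon \widehat{G_J}\to \mathrm{GL}(V_J)$ of the abstract Coxeter group factors as $\widehat{G_J}\xrightarrow{\phi} G_J \xrightarrow{\,\rho|_{V_J}\,}\mathrm{GL}(V_J)$, since both send each generator $s\in J$ to the same reflection of $V_J$. Applying the faithfulness theorem to $\widehat{G_J}$ itself, whose canonical representation $\widehat{\rho}$ is therefore injective, the factorization forces $\phi$ to be injective. Hence $\phi$ is an isomorphism, and $(G_J, J)$ is a Coxeter system, as required.

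The only subtle point is thus the faithfulness of the Tits representation; granting it, everything else is the routine check that restricting the ambient form to the $J$-coordinates reproduces the form of the smaller Coxeter datum. A purely combinatorial alternative, avoiding the linear representation, would instead prove via the exchange/deletion condition that any $S$-reduced expression of an element of $G_J$ uses only letters of $J$, so that the restriction of the $S$-length to $G_J$ agrees with the intrinsic $J$-length; this again yields injectivity of $\widehat{G_J}\to G_J$, but the geometric argument is cleaner.
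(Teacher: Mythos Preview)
Your argument via the faithfulness of the Tits representation is correct: the canonical reflection representation of $\widehat{G_J}$ on $V_J$ does factor through $\phi$, and Tits' theorem applied to $\widehat{G_J}$ forces $\phi$ to be injective.

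There is, however, nothing to compare against in the paper itself: the statement is recorded as a \emph{Fact} with a citation to \cite[2.4.1(i)]{brenti} and is not proved in the paper. For what it is worth, the proof in the cited reference (Bj\"orner--Brenti) is precisely the combinatorial route you mention at the end as an alternative: they characterise Coxeter systems by the Exchange Condition, observe that any $S$-reduced expression for $w\in G_J$ already lies in $J^{<\omega}$ (via the Subword/Deletion property), and conclude that $(G_J,J)$ inherits the Exchange Condition from $(G,S)$. So your primary argument is a genuinely different, geometric, proof; it trades the self-contained combinatorics of reduced words for one black-box input (faithfulness of the canonical representation), after which the remainder is the trivial check that $V_J$ is $G_J$-invariant and that $B|_{V_J}$ has the right Gram matrix. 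Both approaches are standard; the combinatorial one is closer to what the paper is implicitly invoking through its citation.
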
 
	
	\begin{fact}[{\cite[Corollary 3.1.7]{krammer}}]\label{krammer_fact} Let $(G, S)$ be a Coxeter system and let $I, J \subseteq S$. Then $G_I$ and $G_J$ are conjugate in $G$ if and only if
$I$ and $J$ are in the same connected component of $\mathcal{K}_S$, where $\mathcal{K}_S$ is as defined in Definition~\ref{def_K}.
\end{fact}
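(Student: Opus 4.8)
\section*{Proof proposal}

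My plan is to prove the two implications of the biconditional separately, the sufficiency direction being a finite computation and the necessity direction the real content. For sufficiency I would show that every edge of $\mathcal{K}_S$ supplies an explicit conjugator. Fix an edge from $I$ to $J$ labelled $s$, so that the connected component $K$ of $I\cup\{s\}$ containing $s$ in $\Gamma^{\mrm{op}}$ is spherical and $\nu(I,s)=w_{K\setminus\{s\}}w_K\in G_K$ (Definition~\ref{def_K}). I claim $\nu(I,s)^{-1}G_I\,\nu(I,s)=G_J$. This is a finite computation inside the spherical Coxeter group $G_K$: the generators of $I$ lying outside $K$ commute with $G_K$, since being in different $\Gamma^{\mrm{op}}$-components they are joined to $K$ by label-$2$ edges of the Coxeter graph, and so are fixed by the conjugation; while on $K$ the conjugation acts as the composite $\sigma_K\circ\sigma_{K\setminus\{s\}}$ of the two opposition involutions induced by $w_K$ and by $w_{K\setminus\{s\}}$, and one checks that this composite carries $I\cap K=K\setminus\{s\}$ onto $J\cap K=K\setminus\{t\}$. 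Composing the conjugators $\nu(I_{k-1},s_k)$ along a path $I=I_0,\dots,I_n=J$ in $\mathcal{K}_S$ then shows that $I$ and $J$ in the same component forces $G_I$ and $G_J$ conjugate.

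For necessity assume $wG_Iw^{-1}=G_J$ and take $w$ of minimal length with this property; one first checks that $w$ is then $(J,I)$-reduced, so that no $s\in J$ is a left descent and no $t\in I$ a right descent of $w$. I would induct on $\ell(w)$, the base case $\ell(w)=0$ giving $I=J$. If $w\neq e$, then after replacing $w$ by $w^{-1}$ (which conjugates $G_J$ onto $G_I$ and interchanges left and right descents) if necessary, I may assume $w$ has a left descent $s$, and by reducedness $s\notin J$. The heart of the argument is the claim that the component $K$ of $J\cup\{s\}$ containing $s$ in $\Gamma^{\mrm{op}}$ is spherical, so that $\nu(J,s)$ is defined and labels an edge $J\to J'$ of $\mathcal{K}_S$ with $J'=(J\cup\{s\})\setminus\{t\}$, and that $\ell(\nu(J,s)^{-1}w)<\ell(w)$. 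Since $\nu(J,s)^{-1}$ conjugates $G_J$ onto $G_{J'}$ by the first paragraph, $\nu(J,s)^{-1}w$ is then a strictly shorter conjugator of $G_I$ onto the standard parabolic $G_{J'}$, and the inductive hypothesis produces a path in $\mathcal{K}_S$ from $J'$, hence from $J$, to $I$.

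The main obstacle is exactly this inductive step: proving that a minimal conjugator always factors through a single elementary wall-crossing with $\nu(J,s)$ defined and with a strict drop in length. This is where the geometry of the fundamental sectors $\Pi_I$ of Definition~\ref{def_PI} and the exchange/deletion condition become indispensable. Reading $w$ as the left translate $w\Pi_I$ of the $I$-sector, minimality should force $w\Pi_I$ to be adjacent to $\Pi_J$ across a single wall labelled $s$, and the content to be extracted is twofold: first, that $s$ together with $J$ cannot span a non-spherical component of $\Gamma^{\mrm{op}}$, for then the corresponding wall could not be crossed by a finite longest element without contradicting minimality of $w$; and second, that the longest-element factor $w_{K\setminus\{s\}}w_K$ precisely cancels the chosen descent rather than lengthening $w$, which is the identity $\nu(J,s)^{-1}\Pi_J=\Pi_{J'}$ of Definition~\ref{def_K} rephrased at the level of lengths. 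I would isolate this as a single lemma: every nontrivial $(J,I)$-reduced conjugator $w$ of $G_I$ onto $G_J$ admits a factorisation $w=\nu(J,s)\,w'$ with $w'$ again a conjugator of $G_I$ onto a standard parabolic and $\ell(w')<\ell(w)$. Granting this lemma the criterion follows by the induction just described, its converse being the computation of the first paragraph.
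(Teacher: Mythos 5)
First, a point of comparison: the paper does not prove this statement at all --- it is imported as a black-box \emph{Fact} cited from Krammer [3.1.7] --- so your attempt can only be measured against the known proof in the literature (Krammer's Section~3.1, going back to Deodhar's theory of elementary equivalences of subsets of $S$). Your outline faithfully reproduces that architecture, and your sufficiency direction is correct and essentially complete: for $u\in(I\cup\{s\})\setminus K$, a non-edge of $\Gamma^{\mathrm{op}}$ between $u$ and $K$ means every label $m(u,k)$ equals $2$, so $u$ commutes with $G_K$; and on $K$ conjugation by $\nu(I,s)=w_{K\setminus\{s\}}w_K$ is the composite $\sigma_K\circ\sigma_{K\setminus\{s\}}$ of opposition involutions, carrying $K\setminus\{s\}$ onto $K\setminus\{\sigma_K(s)\}$, which matches the $J=(I\cup\{s\})\setminus\{t\}$ of Definition~\ref{def_K}. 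Composing along paths in $\mathcal{K}_S$ is then routine. (Your ``replace $w$ by $w^{-1}$'' step in the other direction is unnecessary --- every nonidentity element has a left descent --- but harmless.)

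The genuine gap is exactly where you flag it, and flagging it does not fill it: the factorisation lemma --- that a nontrivial minimal-length, $(J,I)$-reduced conjugator $w$ of $G_I$ onto $G_J$ with left descent $s\notin J$ forces the $\Gamma^{\mathrm{op}}$-component $K$ of $J\cup\{s\}$ containing $s$ to be spherical, and satisfies $w=\nu(J,s)w'$ with $\ell(w')<\ell(w)$ --- \emph{is} the theorem; everything else is bookkeeping. Your heuristic for it is moreover misleading: minimality does not force $w\Pi_I$ to be ``adjacent to $\Pi_J$ across a single wall labelled $s$,'' since an elementary move $\nu(J,s)$ has length $\ell(w_K)-\ell(w_{K\setminus\{s\}})$, typically much greater than $1$, so the inductive step is never a single wall-crossing. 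The actual argument runs through roots, not sector adjacency: one first shows that a minimal double-coset representative conjugating $G_I$ onto $G_J$ conjugates $I$ onto $J$ elementwise (itself a nontrivial step, using Kilmoyer-type intersection results for parabolic subgroups), hence sends each simple root $\alpha_i$, $i\in I$, to a simple root; then, from a left descent $s$ of $w$, a root-counting argument shows that the positive roots of the subsystem spanned by $K$ that $w^{-1}$ makes negative exhaust a finite root subsystem, which is what yields sphericity of $K$ and the additive factorisation $\ell(w)=\ell(\nu(J,s))+\ell(w')$. Without some version of this root-theoretic input your induction has no engine; with it, your outline closes up into the standard Deodhar--Krammer proof.
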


	\begin{lemma}\label{non_conjugate_lemma} Let $s \neq t \in \Gamma$. Then   $S_5(s)$ is not conjugate to    $S_5(t)$  in $G(C_{  \Gamma})$.
\end{lemma}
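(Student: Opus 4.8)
The plan is to apply the conjugacy criterion of Fact~\ref{krammer_fact} to the special subgroups $S_5(s) = G_{\{s_1,\dots,s_4\}}$ and $S_5(t) = G_{\{t_1,\dots,t_4\}}$ of $G = G(C_\Gamma)$. Write $I = \{s_1,\dots,s_4\}$ and $J = \{t_1,\dots,t_4\}$. By Fact~\ref{krammer_fact}, $G_I$ and $G_J$ are conjugate if and only if $I$ and $J$ lie in the same connected component of the graph $\mathcal{K}_S$ from Definition~\ref{def_K}. So it suffices to show that $I$ and $J$ are in \emph{distinct} connected components of $\mathcal{K}_S$. Equivalently, starting from $I$ and following edges of $\mathcal{K}_S$, one can never reach $J$.

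\smallskip\noindent First I would analyze the edges of $\mathcal{K}_S$ emanating from $I$. An edge from a subset $I'$ goes to $(I' \cup \{x\}) \setminus \{y\}$ and is governed by $\nu(I', x) = w_{K \setminus \{x\}} w_K$, where $K$ is the connected component of $I' \cup \{x\}$ containing $x$ in the graph $(C_\Gamma)^{\mathrm{op}}$, and $\nu(I', x)$ is defined only when $K$ is spherical (i.e. $G_K$ is finite). The crucial observation, which I would isolate as the key combinatorial fact, is that in $(C_\Gamma)^{\mathrm{op}}$ the vertex set $\{s_1,\dots,s_4\}$ is connected via the label-$3$ edges of the $L_4$-pattern (these survive in the $\mathrm{op}$-graph since they are not label-$2$ edges), while all the label-$2$ edges that connect the block of $s$ to any other block become non-edges in $(C_\Gamma)^{\mathrm{op}}$, hence in the $\mathrm{op}$-graph vertices in block $s$ get joined by $\infty$-edges to vertices of other blocks. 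I would use this to control which spherical components $K$ can arise.

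\smallskip\noindent The main point is then to track an invariant preserved along edges of $\mathcal{K}_S$ that distinguishes the block of $s$ from the block of $t$. The natural candidate is the property ``the subset, viewed up to the conjugacy move, stays a subset of a single block $\{v_1,\dots,v_4\}$.'' Concretely, I would argue that any spherical component $K$ containing a generator from block $s$ together with a would-be new generator $x$ from a different block is impossible: adding an element $x$ from another block to (a subset of) $\{s_1,\dots,s_4\}$ either produces a non-spherical $K$ (because the $\infty$-edges in $(C_\Gamma)^{\mathrm{op}}$ force an infinite dihedral or larger subgroup), or $x$ lands in a separate connected component of the $\mathrm{op}$-graph, so that $\nu(I',x)$ acts trivially on the block-$s$ part and cannot merge the two blocks. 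Either way, no sequence of $\mathcal{K}_S$-moves starting at $I$ can introduce a generator from block $t$ while remaining spherical, so the reachable set from $I$ stays within subsets attached to block $s$, and $J$ is unreachable.

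\smallskip\noindent The hard part will be the careful verification of the structure of $(C_\Gamma)^{\mathrm{op}}$ and the resulting spherical components: one must check precisely which $K$ arising in Definition~\ref{def_K} are finite, and confirm that the only spherical components large enough to support the moves stay inside a single block $\{v_1,\dots,v_4\}\cong A_4$-type $L_4$. This requires understanding how the label-$2$ edges (the commuting relations between blocks, and between $s_i,t_i$ / $s_i,t_{4-i}$ for adjacent $s,t$) turn into non-edges, versus how the label-$3$ edges inside each $L_4$ block survive as genuine edges, and then invoking the classification of finite (spherical) Coxeter diagrams to rule out any $K$ that would bridge two distinct blocks. Once this combinatorial analysis is in place, Fact~\ref{krammer_fact} delivers the non-conjugacy immediately.
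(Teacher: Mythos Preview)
Your approach is correct and matches the paper's: both reduce to Fact~\ref{krammer_fact} and analyze which edges of $\mathcal{K}_S$ can leave $I=\{s_1,\dots,s_4\}$. The only difference is that you set up more machinery than needed. You plan to track an invariant (``the subset stays inside a single block'') along paths in $\mathcal{K}_S$, and you allow for two scenarios (non-spherical $K$, or $x$ in a separate component). The paper observes something sharper: for \emph{every} $x\notin I$, the vertex $x$ is joined to at least one $s_i$ by a non-edge of $C_\Gamma$, hence by an $\infty$-edge in $(C_\Gamma)^{\mathrm{op}}$; since the $s_i$ are already connected among themselves by the surviving label-$3$ edges, the component $K$ of $I\cup\{x\}$ containing $x$ is all of $I\cup\{x\}$, and the $\infty$-edge makes $G_K$ infinite. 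Thus $\nu(I,x)$ is \emph{never} defined, $I$ is an isolated vertex of $\mathcal{K}_S$, and no path-tracking or classification of spherical diagrams is required. Your second scenario (``$x$ lands in a separate component'') simply does not occur, and the ``hard part'' you anticipate collapses to a one-line check that every cross-block pair $\{s_i,t_j\}$ is either a label-$2$ edge or a non-edge in $C_\Gamma$.
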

  
	\begin{proof} Let $(G, X) = (G(C_{  \Gamma}), C_{  \Gamma})$. Obviously $S_5(s) = G_I$ for $I = \{s_1, ..., s_4\}$ and $S_5(t) = G_J$ and $J = \{t_1, ..., t_4\}$, so the result follows from Fact~\ref{krammer_fact} by simply looking at the definition of $\mathcal{K}_S$ from Definition~\ref{def_K} in this case. Specifically, for every $v \in \Gamma$, letting $T = \{v_1, ..., v_4 \}$ we have that every $x \in X \setminus T$ is such that the connected component (in the sense of $C(\Gamma)^{\mrm{op}}$) of $T \cup \{x\}$ containing $x$ is always $T \cup \{x\}$ itself, since $x$ is connected to $T$ by an $\infty$-edge in $C(\Gamma)^{\mrm{op}}$; furthermore, $G_{T \cup \{x\}}$ is always infinite (since it is connected by an $\infty$-edge). So $\mathcal{K}_S$ does not contain edges that start at $T$ in the sense of Definition~\ref{def_K}, since that requires $\nu(T, s)$ to exist and in particular that the connected component $K$ is finite, which never happens in our case.
\end{proof}


	\begin{lemma}\label{lemma_for_surhection1} Let $\Gamma$ and $\Delta$ be reflexive graphs.  Suppose that  $\varphi \colon G(C_{  \Gamma}) \to  G(C_{  \Delta})$ is an onto group homomorphism. Then for every vertex $v $ of $ \Delta$, there is  a vertex $s $   of $ \Gamma$ such that $\varphi (S_5(s)) $ is conjugate to $S_5(v)$ in $ G(C_{  \Delta})$. 
\end{lemma}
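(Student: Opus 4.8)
The plan is to deduce the statement from the characterization of $A_5$-subgroups in Lemma~\ref{char_of_the_Alt_5} by producing, for a fixed vertex $v$ of $\Delta$, a finite subgroup of $G(C_\Gamma)$ that maps isomorphically onto the simple group $A_5(v)$. Write $G = G(C_\Gamma)$ and $G' = G(C_\Delta)$. Granting such a subgroup $A^\ast \leq G$ with $\varphi(A^\ast) = A_5(v)$ and $A^\ast \cong A_5$, Lemma~\ref{char_of_the_Alt_5}(1) gives $g \in G$ and a vertex $s$ of $\Gamma$ with $A^\ast = g A_5(s) g^{-1}$, whence $\varphi(A_5(s)) = \varphi(g)^{-1} A_5(v) \varphi(g)$ is conjugate to $A_5(v)$ in $G'$.

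To upgrade from $A_5$ to $S_5$, I would pass to $B = g S_5(s) g^{-1} \cong S_5$, whose derived subgroup is exactly $A^\ast$. Then $\varphi(B)$ is a quotient of $S_5$ containing the copy $A_5(v) = \varphi(A^\ast)$ of $A_5$, so $\varphi(B) \cong S_5$, the restriction $\varphi \uhr B$ is injective, and $[\varphi(B), \varphi(B)] = A_5(v)$. By Lemma~\ref{char_of_the_Alt_5}(2) we may write $\varphi(B) = h S_5(w) h^{-1}$ for some vertex $w$ of $\Delta$, and comparing derived subgroups yields $h A_5(w) h^{-1} = A_5(v)$. Now the parabolic closure of $A_5(v)$, i.e.\ the smallest parabolic subgroup of $G'$ containing it, is $S_5(v)$: it lies inside the finite parabolic $S_5(v)$, and no proper parabolic of $S_5(v) \cong S_5$ has order $\geq 60$, so none contains $A_5(v)$. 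Since conjugation carries parabolic closures to parabolic closures, $h S_5(w) h^{-1} = S_5(v)$, so $S_5(w)$ is conjugate to $S_5(v)$ and Lemma~\ref{non_conjugate_lemma} forces $w = v$. Unwinding, $\varphi(S_5(s)) = \varphi(g)^{-1} \varphi(B) \varphi(g)$ is conjugate to $S_5(v)$, as required.

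The hard part is producing $A^\ast$, i.e.\ lifting the finite simple group $A_5(v) \leq G'$ through the surjection $\varphi$ to an isomorphic copy in $G$. This cannot follow from soft generalities: for a general surjection out of a Coxeter group, a finite subgroup of the target need not be the image of any finite subgroup of the source (for instance $(\mathbb{Z}/2 \times \mathbb{Z}/2) \ast \mathbb{Z}/2$ surjects onto $S_3$ while having no subgroup of order $6$), so the argument must exploit the $A_5$-rich block structure of $G$. I would fix a two-generator presentation $A_5 = \langle a, b \mid a^2 = b^3 = (ab)^5 = e \rangle$ realized inside $A_5(v)$ and seek lifts $\tilde a, \tilde b \in G$ of $a, b$ with $\tilde a, \tilde b, \tilde a \tilde b$ of orders dividing $2, 3, 5$; any such choice makes $\langle \tilde a, \tilde b \rangle$ a quotient of the simple group $A_5$, hence (being mapped onto $A_5(v) \neq \{e\}$) a copy of $A_5$ on which $\varphi$ is injective. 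Controlling the orders of the lifts is the crux, and here I expect to use Fact~\ref{finite_subgroups_fact}: every finite-order element lies in a finite parabolic, and the only finite parabolics of $G$ that carry an element of order $3$ or $5$ inside a nonabelian piece are the blocks $S_5(s)$, which should let me correct arbitrary lifts modulo $\ker \varphi$ into finite-order elements inside suitable blocks. As an auxiliary device I note the retraction $\rho_v \colon G' \twoheadrightarrow S_5(v)$ sending each generator $w_i$ with $w \neq v$ to $e$ and fixing $v_1, \dots, v_4$; this is well defined because in $C_\Delta$ every edge joining the $v$-block to another block carries label $2$, and the composite $\rho_v \circ \varphi \colon G \twoheadrightarrow S_5(v)$ at least guarantees that the $v$-block is genuinely hit and localizes the search for the lift to preimages of a fixed generating pair of $A_5(v)$. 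Making this correction step precise is where I expect the real difficulty to lie.
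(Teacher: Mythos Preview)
Your first two paragraphs are correct: granted a subgroup $A^\ast \le G(C_\Gamma)$ with $A^\ast \cong A_5$ and $\varphi(A^\ast) = A_5(v)$, the parabolic-closure argument cleanly pins down the conjugacy class of $\varphi(S_5(s))$. The gap is exactly where you locate it, namely producing $A^\ast$. You acknowledge that lifting a finite simple subgroup through a surjection of Coxeter groups is not automatic, give a valid cautionary example, and then stop short of an argument. Nothing in your third paragraph explains how to correct arbitrary lifts of a generating pair $(a,b)$ of $A_5(v)$ into elements of the required orders simultaneously, and I do not see how to carry this out without essentially proving the lemma by other means first.

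The paper's proof sidesteps lifting entirely and is much shorter. It argues by contradiction: suppose some vertex $w$ of $\Delta$ is missed, i.e.\ no $\varphi(S_5(s))$ is conjugate to $S_5(w)$. Compose $\varphi$ with the retraction $\pi \colon G(C_\Delta) \to S_5(w)$ (your $\rho_v$) to get a surjection $\eta = \pi \circ \varphi \colon G(C_\Gamma) \twoheadrightarrow S_5(w)$. For each vertex $s$ of $\Gamma$, either $\varphi \uhr S_5(s)$ already has nontrivial kernel, or $\varphi(S_5(s)) \cong S_5$ is, by Lemma~\ref{char_of_the_Alt_5}(2), conjugate to some $S_5(v)$ with $v \neq w$, and is then annihilated by $\pi$. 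Either way $\eta(S_5(s))$ is a proper quotient of $S_5$, hence of order at most $2$, so $\eta(s_i) = \eta(s_4)$ for all $i$. But the elements $s_4$ pairwise commute in $G(C_\Gamma)$ by construction, so the image of $\eta$ is generated by pairwise commuting elements, contradicting that $S_5(w)$ is nonabelian. The retraction you introduced only as an ``auxiliary device'' is in fact the entire engine of the argument; no lift is ever needed.
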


	\begin{proof}  
		Assume for a contradiction that  there is a vertex $w$ of $ \Delta$ 
		  such that no subgroup of the form $\varphi(\mrm{S_5}(v))$, for a vertex $v $ of $ \Gamma$, is   conjugate of $\mrm{S_5}(w)$. 
		  Let $\pi$ be the homomorphism of $G(C_\Delta)$ onto $S_5(w)$ that sends each generator $v_i$, for a vertex $v$ of $\Delta$, distinct to $w$, to $e$, and $w_i$ to $w_i$.
		 Note that  $\eta := \pi  \circ \varphi$  is an onto map 
		 $$G(C_{  \Delta}) \to S_5(w).$$
		
\smallskip \noindent 	For any  vertex $v \neq w$ of $\Delta$, the map  $\pi$ sends any conjugate of $S_5(v)$  to $\{e\}$. Thus, by the hypothesis on $\varphi$,  for each vertex $s$ of $\Gamma$, the map $\eta \restriction S_5(s)$ is not 1-1. Since the only proper quotients of $S_5$ are $\mathbb{Z}/2\mathbb{Z}$ and the trivial group, this implies $\eta (s_i)= \eta(s_4)$ for each vertex $s$ of $\Gamma$. Since $[r_4, s_4]= 1$ in  $G(C_{  \Gamma})$ and $\eta$ is onto, we conclude that   $\{ \eta(s_4) : s \text{ a vertex of } \Gamma\}$ form a generating set of pairwise commuting elements for $S_5$, contrary to the fact that $S_5$ is nonabelian. 
\end{proof}

	Lemma~\ref{lemma_Feng} below will be crucially needed in the proof of Claim~2 from the proof of Theorem~\ref{the_hammer}. In order to prove it we also need to state a few facts and definitions.

We first recall (see \cite[Section 4.2]{brenti}) that every Coxeter system $(G, S)$ admits a geometric representation $\sigma\colon G\to GL(E)$ where $E$ is a real vector space of dimension $|S|$. More specifically, let $\{\alpha_s\}_{s\in S}$ be a basis of $E$. One can define, for each $s\in S$, a linear mapping $\sigma_s\colon E\to E$ by a bilinear form associated to the Coxeter system (see \cite[Section 4.2]{brenti} for details). Then $\sigma\colon G\to GL(E)$ can be obtained as a unique group homomorphism extending $s\mapsto \sigma_s$ (\cite[Theorem 4.2.2]{brenti}). Under this representation, the root system of $(G, S)$ is $\{w(\alpha_s)\}_{w\in G, s\in S}$; its elements are called roots. The elements of $\{\alpha_s\}_{s\in S}$ are the simple roots. In the following, we fix a geometric representation for each Coxeter system.

	\begin{definition}[\cite{rich}] We say that a Coxeter system $(G, S)$ is of $(-1)$-type if $G$ is finite and the action of the longest element $w = w_S$ of $(G, S)$ (cf. Definition~\ref{def_long}) satisfies that $w \cdot \alpha_s = -\alpha_s$ for any $s \in S$, where $\alpha_s$ is the simple root associated to $s \in S$.
\end{definition}	
	
	\begin{fact}[{\cite[Theorem~A]{rich}}]\label{fact1_Feng}
Let $(G, S)$ be a Coxeter system and $w$ be an element of $G$ with $w^2 = 1$. Then there exist a subset $I \subseteq S$ and an element $u \in G$ such that $(G_I, I)$ is of $(-1)$-type and $uwu^{-1} = w_I$.
\end{fact}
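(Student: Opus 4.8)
Since Fact~\ref{fact1_Feng} is Richardson's theorem on involutions, the plan is to reconstruct it through the canonical geometric (Tits) representation of $(G,S)$, reducing an arbitrary involution to a fixed point and then analyzing the resulting finite parabolic. Let $G$ act on $V = \bigoplus_{s\in S}\mathbb{R}\alpha_s$ preserving the $G$-invariant bilinear form $B$, with each $s\in S$ acting as the reflection in $\alpha_s$, and consider the contragredient action on $V^*$ together with the open fundamental chamber $C=\{f\in V^*: f(\alpha_s)>0 \text{ for all } s\in S\}$ and the Tits cone $U=\bigcup_{g\in G} g\cdot\overline{C}$. I would invoke the standard facts about this setup (as in \cite{davis}): $\overline{C}$ is a strict fundamental domain; the stabilizer of $y\in\overline{C}$ is the standard parabolic $G_{I(y)}$ with $I(y)=\{s\in S: y(\alpha_s)=0\}$; the cone $U$ is convex; and a point $y\in\overline{C}$ has \emph{finite} stabilizer precisely when it lies in the interior $U^{\circ}$.

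The first step is to produce a fixed point of finite stabilizer. Given an involution $w$, I would fix $x_0\in C$ and set $x=\tfrac{1}{2}(x_0+w\cdot x_0)$, so that $w\cdot x=x$; since $x_0,\,w\cdot x_0\in U^{\circ}$ and $U^{\circ}$ is open and convex, also $x\in U^{\circ}$. Choosing $u\in G$ with $y:=u\cdot x\in\overline{C}$, the element $w':=uwu^{-1}$ is an involution fixing $y\in U^{\circ}\cap\overline{C}$, and therefore lies in the \emph{finite} standard parabolic $G_{I_0}$, where $I_0=I(y)$. This reduces the problem to understanding the involution $w'$ inside a finite Coxeter group $G_{I_0}$, which acts as a genuine Euclidean reflection group on $V_{I_0}=\mathrm{span}\{\alpha_s:s\in I_0\}$, where $B$ is positive definite.

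The heart of the argument is the eigenspace analysis inside $G_{I_0}$. I would decompose $V_{I_0}=V_{+}\oplus V_{-}$ into the $(+1)$- and $(-1)$-eigenspaces of $w'$. By Steinberg's fixed-point theorem the pointwise stabilizer $P=\{g\in G_{I_0}: g|_{V_{+}}=\mathrm{id}\}$ is a reflection subgroup, hence a parabolic subgroup of $G_{I_0}$, acting faithfully on $V_{-}$; moreover $w'\in P$ and $w'$ acts as $-1$ on $V_{-}$. Consequently $w'$ sends every positive root of $P$ to a negative one, so it is the longest element of $P$ (cf.~\ref{def_long}), and the relation $w'=-\mathrm{id}$ on $V_{-}$ is exactly the statement that $P$, with its natural simple system, is of $(-1)$-type. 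Finally, every parabolic subgroup is conjugate to a standard one $G_{I}$ with $I\subseteq I_0\subseteq S$; conjugating $P$ to $G_{I}$ carries $w'$ to $w_{I}$, with $(G_{I},I)$ of $(-1)$-type. Composing this conjugation with $u$ yields the required element and subset.

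The main obstacle is the third paragraph: passing from ``an involution fixing a point inside a finite parabolic'' to ``the longest element of a $(-1)$-type parabolic''. This requires the $\pm 1$-eigenspace decomposition, Steinberg's theorem identifying pointwise stabilizers as parabolic reflection subgroups, and the verification that the action by $-1$ on $V_{-}$ coincides with the definition of $(-1)$-type (i.e.\ $w_{I}\alpha_s=-\alpha_s$ for $s\in I$). By contrast, the geometric reduction steps—locating a fixed point via the midpoint trick, controlling its stabilizer through convexity of $U^{\circ}$, and conjugating it into $\overline{C}$—are routine applications of the standard Tits-cone machinery.
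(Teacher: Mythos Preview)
The paper does not prove Fact~\ref{fact1_Feng}; it is simply stated as a fact with a citation to Richardson~\cite{rich}, so there is no in-paper argument to compare against. Your reconstruction is correct and is essentially Richardson's own proof: the midpoint-in-the-Tits-cone trick to land the involution in a finite standard parabolic, followed by the $\pm 1$-eigenspace analysis and Steinberg's theorem to identify it as the longest element of a $(-1)$-type parabolic, is exactly the route taken in~\cite{rich}. One small point worth making explicit in your write-up is the verification that the fixed space of $P$ in $V_{I_0}$ is \emph{exactly} $V_{+}$ (not larger), which you need so that $V_{-}$ really is the reflection module for $P$; this follows because $w'\in P$ fixes the $P$-fixed space but acts as $-1$ on $V_{-}$, forcing the fixed space to meet $V_{-}$ trivially.
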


	Given a group $G$, an element $w \in G$ and a subgroup $H \leq G$ we denote by $Z_G(w)$ the centralizer of $w$ in $G$ and by $N_G(H)$ the normalizer of $H$ in $G$.

\begin{fact}[{\cite[Proposition~2.16 (i)]{nuida}}]\label{fact2_Feng}
Let $(G, S)$ be a Coxeter system and $I$ be a subset of $S$ such that that $(G_I, I)$ is of $(-1)$-type. Then $Z_G(w_I) = N_G(G_I)$.
\end{fact}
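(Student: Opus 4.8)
The plan is to establish the two inclusions $N_G(G_I)\subseteq Z_G(w_I)$ and $Z_G(w_I)\subseteq N_G(G_I)$ by passing to the (faithful) geometric representation of $(G,S)$. Let $V=\bigoplus_{s\in S}\mathbb{R}\alpha_s$ carry the standard $G$-invariant bilinear form $B$, let $\Phi=\{g\alpha_s : g\in G,\ s\in S\}$ be the root system, and set $V_I=\mathrm{span}\{\alpha_s : s\in I\}$. The first step is to read off the geometric meaning of the $(-1)$-type hypothesis: since $w_I\cdot\alpha_s=-\alpha_s$ for every $s\in I$ and these simple roots span $V_I$, the element $w_I$ acts as $-\mathrm{id}$ on $V_I$. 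Because $G_I$ is finite, $B$ restricts to a positive definite form on $V_I$, so $V=V_I\oplus V_I^\perp$, and each generating reflection of $G_I$ fixes $V_I^\perp$ pointwise (as $V_I^\perp\subseteq\alpha^\perp$ for every $\alpha\in V_I$); in particular $w_I$ acts as $+\mathrm{id}$ on $V_I^\perp$. Hence the $(-1)$-eigenspace of $w_I$ on $V$ is exactly $V_I$. This identity is the engine of both inclusions.

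For $N_G(G_I)\subseteq Z_G(w_I)$, take $g\in N_G(G_I)$. Conjugation by $g$ is an automorphism of $G$ carrying reflections to reflections and preserving $G_I$ setwise, so it permutes the roots in $\Phi\cap V_I$ and therefore satisfies $gV_I=V_I$. A direct computation then shows that $gw_Ig^{-1}$ acts as $-\mathrm{id}$ on $V_I$; since it also lies in $G_I$ it fixes $V_I^\perp$, so it agrees with $w_I$ on all of $V=V_I\oplus V_I^\perp$, and faithfulness of the representation yields $gw_Ig^{-1}=w_I$, i.e. $g\in Z_G(w_I)$.

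For the reverse inclusion, take $g\in Z_G(w_I)$. Commuting with $w_I$ forces $g$ to preserve each eigenspace of $w_I$, so again $gV_I=V_I$. Here I would invoke the standard description of the reflection subgroup $G_I$: its set of reflections is exactly $\{s_\alpha : \alpha\in\Phi\cap V_I\}$, and these generate $G_I$. For such an $\alpha$ we have $g s_\alpha g^{-1}=s_{g\alpha}$ with $g\alpha\in\Phi\cap gV_I=\Phi\cap V_I$, so $g$ conjugates every generating reflection of $G_I$ back into $G_I$; applying the same to $g^{-1}$ gives $gG_Ig^{-1}=G_I$, that is, $g\in N_G(G_I)$.

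The main obstacles are the two structural inputs, not the eigenspace bookkeeping. First, one needs that $w_I$ really does act as $+\mathrm{id}$ on the $B$-orthogonal complement $V_I^\perp$, so that the $(-1)$-eigenspace is not strictly larger than $V_I$; this rests on finiteness of $G_I$ and nondegeneracy of $B|_{V_I}$. Second, one needs the identification of the reflections lying in $G_I$ with $\{s_\alpha : \alpha\in\Phi\cap V_I\}$, together with the fact that every $g\in G$ permutes $\Phi$. Both belong to the geometric theory of reflection subgroups of Coxeter groups, and it is precisely to package them cleanly that the statement is quoted from \cite{nuida}. Once they are granted, the argument above is entirely a matter of tracking eigenspaces and invoking faithfulness.
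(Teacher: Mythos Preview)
The paper does not prove this statement at all: it is recorded as a \emph{Fact} with a bare citation to \cite[Proposition~2.16(i)]{nuida}, and is then used as a black box in the proof of Lemma~\ref{lemma_Feng}. So there is no proof in the paper to compare against; your proposal supplies an argument where the authors simply import one.

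As for the argument itself, it is sound. The key observation --- that the $(-1)$-type hypothesis pins down $V_I$ as exactly the $(-1)$-eigenspace of $w_I$ in the geometric representation --- is the right one, and the two inclusions then follow by the eigenspace bookkeeping you describe. The two structural inputs you flag (that $G_I$ fixes $V_I^\perp$ pointwise, giving the eigenspace decomposition, and that the reflections of $G_I$ are precisely the $s_\alpha$ with $\alpha\in\Phi\cap V_I$) are both standard facts about standard parabolic subgroups; the second is what makes $gV_I=V_I$ equivalent to $g\in N_G(G_I)$. One minor point worth making explicit: in the paper's setting $S$ may be infinite, but since $G_I$ is finite the set $I$ is finite, $B|_{V_I}$ is positive definite on the finite-dimensional space $V_I$, and the orthogonal decomposition $V=V_I\oplus V_I^\perp$ goes through regardless of $\dim V$. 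Faithfulness of the geometric representation (Tits) also holds without finiteness assumptions on $S$, so nothing breaks.
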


	\begin{definition} Let $(G, S)$ be a Coxeter system and $I$ be a subset of $S$. Let 
	$$H_I = \{g \in G : g \Pi_I = \Pi_I\},$$
where $\Pi_I$ is as in Definition~\ref{def_PI}.
\end{definition}

	The following fact was originally proved in \cite{brink}, but see also \cite[Proposition~3.1.9 (a)]{krammer}. In fact we will use \cite[Corollary~3.1.5]{krammer} in the proof of Lemma~\ref{lemma_Feng}.

\begin{fact}[{\cite[Proposition 3.1.9 (a)]{krammer}}]\label{fact3_Feng}
Let $(G, S)$ be a Coxeter system and $I$ be a subset of $S$. Then 
$$N_G(G_I) = G_I \rtimes H_I.$$
\end{fact}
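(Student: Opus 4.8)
The plan is to verify the three conditions that together amount to the internal semidirect decomposition $N_G(G_I) = G_I \rtimes H_I$: that $G_I \cap H_I = \{e\}$, that $H_I \leq N_G(G_I)$ (so that $G_I H_I$ lies inside $N_G(G_I)$ and $H_I$ acts on $G_I$ by conjugation), and conversely that $N_G(G_I) \subseteq G_I H_I$. The combinatorial backbone for all three is the description of $\Pi_I$ from \ref{def_PI}: its elements are the $(I,\emptyset)$-reduced words, i.e.\ the unique minimal-length representatives of the right cosets $G_I w$. I would begin by recording the attendant \emph{unique factorization} (standard, \cite[pg.~47]{davis}): every $w \in G$ is uniquely $w = u d$ with $u \in G_I$, $d \in \Pi_I$ and $\ell(w) = \ell(u) + \ell(d)$; in particular $e \in \Pi_I$ and $\Pi_I \cap G_I = \{e\}$.

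Given this, the trivial-intersection condition is immediate: if $g \in G_I \cap H_I$ then, since $e \in \Pi_I$ and $g \Pi_I = \Pi_I$, we have $g = g \cdot e \in \Pi_I$, whence $g \in G_I \cap \Pi_I = \{e\}$. The normality of $G_I$ inside $N_G(G_I)$ is automatic, so the real point is to match up the reduced elements with the normalizer, which I would do through the characterization of sector-preserving elements in \cite[Corollary~3.1.5]{krammer}. Concretely, the first substantive inclusion $H_I \leq N_G(G_I)$ says that any $g$ fixing the fundamental sector setwise conjugates $G_I$ to itself; this is the more routine half of the sector analysis, since $g \Pi_I = \Pi_I$ forces $g$ to permute the cosets $G_I w$ compatibly with the $G_I$-action, and I would read it off directly from the cited corollary.

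The remaining inclusion $N_G(G_I) \subseteq G_I H_I$ is where the real work lies. Given $g \in N_G(G_I)$, I would apply the unique factorization to write $g = u d$ with $u \in G_I$ and $d \in \Pi_I$; since $u \in G_I \subseteq N_G(G_I)$, the reduced part $d = u^{-1} g$ also normalizes $G_I$. It then remains to prove that an $(I,\emptyset)$-reduced element normalizing $G_I$ must stabilize the sector, i.e.\ $d \in H_I$. This is the main obstacle: using $d G_I d^{-1} = G_I$ one has $G_I d w = d G_I w$ for every $w$, and one must show that left-translating a minimal coset representative by such a $d$ again yields a minimal representative, a delicate statement about the ascents $\ell(s d w) > \ell(d w)$ for $s \in I$. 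I would establish it by transporting reduced expressions across the conjugation $d(\cdot)d^{-1}$ and again invoking \cite[Corollary~3.1.5]{krammer} (the result ultimately going back to \cite{brink}). Once $d \in H_I$ is shown, $g = u d \in G_I H_I$, and together with the trivial intersection and $H_I \leq N_G(G_I)$ this yields the desired semidirect product $N_G(G_I) = G_I \rtimes H_I$.
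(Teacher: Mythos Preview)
The paper does not give its own proof of this statement: it is recorded as a \emph{Fact} and attributed to the literature (originally Brink--Howlett \cite{brink}, with the formulation in Krammer \cite[Proposition~3.1.9]{krammer}). Your proposal is a correct outline of the standard argument behind those references---unique $G_I\Pi_I$ factorization for $G_I\cap H_I=\{e\}$, then showing that the $(I,\emptyset)$-reduced part of a normalizing element stabilizes the sector---so nothing you wrote conflicts with the paper, but there is no in-paper proof to compare it to.
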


\begin{fact}[{\cite[Corollary 3.1.5]{krammer}}]\label{factHI} Let $(G, S)$ be a Coxeter system and $I$ be a subset of $S$. Let $\mathcal{K}^0$ be the connected component of $\mathcal{K}_S$ containing $I$. Let $\mathcal{T}$ be a spanning tree of $\mathcal{K}^0$. For each $J\in \mathcal{T}$, define
$$ \mu(J)=\nu(I_0, s_0)\dots \nu(I_t, s_t) $$
where 
$$ I=I_0\stackrel{s_0}{\longrightarrow}I_1\stackrel{s_1}{\longrightarrow}\cdots\stackrel{s_t}{\longrightarrow}I_{t+1}=J $$
is the unique non-reversing path in $\mathcal{T}$ from $I$ to $J$. Then $H_I$ is generated by all elements of the form
$$ \mu(I_0)\nu(I_0,s_0)\mu(I_1)^{-1} $$
where $I_0\stackrel{s_0}{\longrightarrow}I_1$ is an edge of $\mathcal{K}^0$ which is not an edge of $\mathcal{T}$.
\end{fact}

	\begin{lemma}\label{lemma_Feng} Let $\Gamma$ be the graph with two points $s$ and $t$ not connected by an edge and let $G = G(C_\Gamma)$. Also, let $a \in A_5(s)$ be an involution. Then $Z_G(a) \subseteq S_5(s)$.
\end{lemma}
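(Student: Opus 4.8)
The plan is to reduce $a$ to a standard parabolic involution and then exploit the normalizer machinery of \ref{fact2_Feng} and \ref{fact3_Feng}, controlling the sector stabilizer $H_I$ through the graph $\mathcal{K}_S$ of \ref{def_K}. First I would note that every involution of $A_5(s)$ is a product of two disjoint transpositions, and such elements form a single conjugacy class of $S_5(s)$; in particular each is conjugate inside $S_5(s)$ to $s_1 s_3$ (cf. \ref{remark_commuting_involutions}). Since conjugation by an element $g \in S_5(s)$ maps $S_5(s)$ onto itself and sends $Z_G(a)$ to $Z_G(gag^{-1})$, it suffices to treat $a = s_1 s_3$. Now $I := \{s_1, s_3\}$ generates $G_I = \langle s_1, s_3 \rangle$ of type $A_1 \times A_1$, whose longest element (cf. \ref{def_long}) is exactly $w_I = s_1 s_3 = a$; as $A_1 \times A_1$ is of $(-1)$-type (this is also the instance of \ref{fact1_Feng} with $u = e$), the system $(G_I, I)$ is of $(-1)$-type and $a = w_I$.

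By \ref{fact2_Feng} we obtain $Z_G(a) = Z_G(w_I) = N_G(G_I)$, and by \ref{fact3_Feng} this equals $G_I \rtimes H_I$, where $H_I = \{g \in G : g \Pi_I = \Pi_I\}$ (cf. \ref{def_PI}). Since $G_I = \langle s_1, s_3 \rangle \subseteq S_5(s)$, the statement reduces to showing $H_I \subseteq S_5(s)$. For this I would invoke the description of $H_I$ via loops based at $I$ in $\mathcal{K}_S$ furnished by \cite[Corollary 3.1.5]{krammer}: the group $H_I$ is generated by the products of the elements $\nu(\cdot, \cdot)$ read along closed paths in $\mathcal{K}_S$ starting and ending at $I$.

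The combinatorial heart is to show that the connected component of $I$ in $\mathcal{K}_S$ never leaves the $2$-element subsets of $\{s_1, s_2, s_3, s_4\}$. Here I record the relevant portion of $C_\Gamma^{\mrm{op}}$ (cf. \ref{def_op}): inside $\{s_1, \dots, s_4\}$ it is the path $s_1 - s_2 - s_3 - s_4$, while each $t_j$ is joined to every one of $s_1, s_2, s_3$ (and to $s_4$ when $j \neq 4$) by an edge labelled $\infty$. Any vertex $J$ reachable from $I$ satisfies $|J| = |I| = 2$, since an edge of $\mathcal{K}_S$ replaces $J$ by $(J \cup \{x\}) \setminus \{t\}$. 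Suppose then $J \subseteq \{s_1, \dots, s_4\}$ with $|J| = 2$, so that $J$ meets $\{s_1, s_2, s_3\}$; for any $t_j$ the vertex $t_j$ is $\infty$-adjacent in $C_\Gamma^{\mrm{op}}$ to an element of $J \cap \{s_1, s_2, s_3\}$, whence the connected component $K$ of $t_j$ in $J \cup \{t_j\}$ carries an $\infty$-edge, $G_K$ is infinite, $K$ is not spherical, and by \ref{def_K} the element $\nu(J, t_j)$ does not exist. Thus no edge out of $J$ uses a letter from the $t$-part, so every edge uses some $x \in \{s_1, \dots, s_4\}$ and lands in a $2$-subset of $\{s_1, \dots, s_4\}$; by induction the whole component of $I$ consists of such subsets, and each occurring $\nu(J, x) = w_{K \setminus \{x\}} w_K$ lies in $\langle s_1, \dots, s_4 \rangle = S_5(s)$.

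Consequently every generator of $H_I$ produced by \cite[Corollary 3.1.5]{krammer} lies in $S_5(s)$, so $H_I \subseteq S_5(s)$ and therefore $Z_G(a) = G_I \rtimes H_I \subseteq S_5(s)$, as required. The step I expect to be the main obstacle is the precise translation of \cite[Corollary 3.1.5]{krammer} into the statement that $H_I$ is generated by the $\nu$-products along loops at $I$, together with the bookkeeping ensuring that the single exceptional non-edge $(s_4, t_4)$ of $C_\Gamma^{\mrm{op}}$ never opens an escape route to the $t$-part; the latter is handled by the observation that a reachable $J$ always has exactly two elements and hence necessarily meets $\{s_1, s_2, s_3\}$.
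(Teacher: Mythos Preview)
Your proposal is correct and follows essentially the same route as the paper: reduce $a$ to a longest element $w_I$ of a $(-1)$-type standard parabolic with $I\subseteq\{s_1,\dots,s_4\}$, apply \ref{fact2_Feng} and \ref{fact3_Feng} to get $Z_G(a)=G_I\rtimes H_I$, and then use \cite[Corollary~3.1.5]{krammer} together with the observation that every $2$-subset of $\{s_1,\dots,s_4\}$ meets $\{s_1,s_2,s_3\}$ (whence any added $t_j$ creates an $\infty$-edge) to trap the component of $I$ in $\mathcal{K}_S$ inside $\{s_1,\dots,s_4\}$ and conclude $H_I\subseteq S_5(s)$. The only cosmetic difference is that you normalise to the single subset $I=\{s_1,s_3\}$ by the elementary fact that the involutions of $A_5$ form one $S_5$-conjugacy class, whereas the paper invokes \ref{fact1_Feng} and works with an arbitrary $I\subseteq\{s_1,\dots,s_4\}$ of size $\ge 2$; the ensuing $H_I$ analysis is identical.
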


	\begin{proof} Let $(G, X) = (G(C_{  \Gamma}), C_{  \Gamma})$, so that $X = S \cup T$, where $S = \{s_1, ..., s_4\}$ and $T = \{t_1, ..., t_4\}$. By Facts~\ref{fact1_Feng} and \ref{fact2_Feng} it suffices to show that if $I \subseteq S$ is of size $\geq 2$, then the subgroup $N_G(G_I)$ from Fact~\ref{fact3_Feng} is a subgroup of $S_5(s)$. But using Fact~\ref{factHI} and arguing as we did in Lemma~\ref{non_conjugate_lemma} it is easy to see that, for this choice of $I$, $N_G(G_I)$ is a subgroup of $S_5(s)$ (since outside of $S_5(s)$ $\infty$-edges are involved for such an~$I$).
	\end{proof}

	Recall the definition of pointed reflexive graphs from Definition~\ref{def_R}.

\begin{theorem}\label{the_hammer} Let $\Gamma_{c}$ and $\Delta_{c}$ be pointed reflexive graphs. 
	There is a     homomorphism from $\Gamma_c$ onto $\Delta_c$ if and only if there is a   homomorphism from $G(C_\Gamma)$ onto $G(C_\Delta)$.
\end{theorem}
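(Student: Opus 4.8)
The plan is to prove both implications by translating between graph morphisms and the Coxeter blocks $S_5(s)$ attached to the vertices in Definition~\ref{the_crucial_def}.

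\emph{From graphs to groups.} Suppose $h\colon \Gamma_c \to \Delta_c$ is an onto morphism of pointed reflexive graphs. I would define $\varphi\colon G(C_\Gamma)\to G(C_\Delta)$ on generators by $\varphi(s_i)=h(s)_i$ for every vertex $s$ of $\Gamma_c$ (including $c_\Gamma$, where $h(c_\Gamma)=c_\Delta$) and $i=1,\dots,4$, so that $\varphi$ carries $S_5(s)$ isomorphically onto $S_5(h(s))$. The only point is that $\varphi$ respects the defining relations: inside a single block this is clear; the relations $[s_4,t_4]=e$ go to $[h(s)_4,h(t)_4]=e$, which hold because distinct fourth-generators always commute in $G(C_\Delta)$ and equal ones trivially do; and when $s,t$ are adjacent the relations making $s_i$ commute with $t_i,t_{4-i}$ (for $i=1,3$) map to relations that hold because $h$ preserves adjacency, using that $v_1,v_3$ commute inside each copy of $L_4$ to cover the case $h(s)=h(t)$. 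Surjectivity of $\varphi$ is immediate since $h$ is onto, so every generator $v_i$ of $G(C_\Delta)$ lies in the image.

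\emph{From groups to graphs.} Given an onto homomorphism $\varphi\colon G(C_\Gamma)\to G(C_\Delta)$, I would define $h$ on vertices by cases. If $\varphi$ is injective on $S_5(s)$ then $\varphi(S_5(s))\cong S_5$, so by Lemma~\ref{char_of_the_Alt_5}(2) it is conjugate to $S_5(v)$ for some vertex $v$ of $\Delta$, which is unique by Lemma~\ref{non_conjugate_lemma}; set $h(s)=v$. Otherwise $\varphi(S_5(s))$ is a proper quotient of $S_5$ (hence $\mathbb{Z}/2$ or trivial) and I set $h(s)=c_\Delta$. This makes $h$ well defined, and it is onto by Lemma~\ref{lemma_for_surhection1}, since for each vertex $v$ of $\Delta$ there is $s$ with $\varphi(S_5(s))$ conjugate to $S_5(v)$.

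The crux is showing that $h$ preserves adjacency, and this is where Lemma~\ref{lemma_Feng} enters. Let $s,t$ be distinct adjacent vertices of $\Gamma$. If either image is $c_\Delta$ the conclusion is automatic, since $c_\Delta$ is universal, and if $h(s)=h(t)$ it holds by reflexivity; so assume $h(s)=v\neq w=h(t)$ with both genuine, and suppose toward a contradiction that $v,w$ are non-adjacent. As $s,t$ are adjacent, $s_1s_3$ and $t_1t_3$ commute in $G(C_\Gamma)$ by Remark~\ref{remark_commuting_involutions}, so $\varphi(s_1s_3)$ and $\varphi(t_1t_3)$ commute in $G(C_\Delta)$. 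Let $\Delta_0$ be the two-point graph $\{v,w\}$ with no edge and let $\rho\colon G(C_\Delta)\to G_{\{v_1,\dots,v_4,w_1,\dots,w_4\}}\cong G(C_{\Delta_0})$ be the retraction killing every generator outside the blocks of $v$ and $w$; this is a homomorphism precisely because all edges crossing this set of generators carry the even label $2$. Applying $\rho$ yields commuting involutions $a=\rho\varphi(s_1s_3)$ and $y=\rho\varphi(t_1t_3)$, lying in a conjugate of $A_5(v)$ and of $A_5(w)$ respectively and both nontrivial (as $\rho\varphi$ is injective on each block). Writing $\rho\varphi(S_5(s))=pS_5(v)p^{-1}$ and applying Lemma~\ref{lemma_Feng} inside $G(C_{\Delta_0})$ gives $y\in Z(a)=pZ(a_0)p^{-1}\subseteq pS_5(v)p^{-1}=\rho\varphi(S_5(s))$, and symmetrically $a\in\rho\varphi(S_5(t))$. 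Thus the nontrivial group $\langle a,y\rangle$ sits in the intersection of a conjugate of $S_5(v)$ and a conjugate of $S_5(w)$. Since an intersection of parabolic subgroups is parabolic, and parabolics drawn from the disjoint blocks $\{v_i\}$ and $\{w_j\}$ are non-conjugate apart from the trivial one — by the same analysis of $\mathcal{K}_S$ used in Lemma~\ref{non_conjugate_lemma}, as $v\not\sim w$ forces an $\infty$-edge between the blocks — this intersection is trivial, a contradiction. Hence $v,w$ are adjacent.

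Finally, to recover the pointed structure I would note that, $h$ being an onto reflexive-graph morphism with $c_\Gamma$ adjacent to every vertex, $h(c_\Gamma)$ is a universal vertex of $\Delta_c$; composing $h$ with the reflexive-graph automorphism of $\Delta_c$ interchanging the two universal vertices $h(c_\Gamma)$ and $c_\Delta$ (readily checked to be an automorphism) produces an onto morphism sending $c_\Gamma$ to $c_\Delta$. The main obstacle is the adjacency claim of the previous paragraph: handling the commuting involutions $\varphi(s_1s_3),\varphi(t_1t_3)$ up to conjugacy and reducing the centralizer computation in the large group $G(C_\Delta)$ to the two-point group $G(C_{\Delta_0})$, so that Lemma~\ref{lemma_Feng} applies, together with the parabolic-intersection input needed to force the final contradiction.
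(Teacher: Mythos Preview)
Your overall strategy matches the paper's, but there is a misreading of the setup that breaks the forward direction as written. By Definition~\ref{def_R}, $\Gamma_c$ is obtained from a reflexive graph $\Gamma$ by adjoining a fresh universal vertex $c_\Gamma$, and the group in the theorem is $G(C_\Gamma)$, built from $\Gamma$ alone, \emph{not} from $\Gamma_c$. Thus there are no generators $(c_\Gamma)_i$ in $G(C_\Gamma)$ and no $(c_\Delta)_i$ in $G(C_\Delta)$; your rule $\varphi(s_i)=h(s)_i$ is undefined whenever a vertex $s\in\Gamma$ satisfies $h(s)=c_\Delta$. The paper's fix is to send each such $s_i$ to the identity $e$ (and otherwise to $f(s)_i$); the relator check then goes through because collapsing an entire $L_4$-block to $e$ turns every relator touching it into an involution relator already present in the presentation of $G(C_\Delta)$.

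The same misreading surfaces in your final paragraph: since $c_\Gamma\notin\Gamma$ there is no subgroup $S_5(c_\Gamma)$, so your case analysis never assigns a value to $h(c_\Gamma)$, and the universal-vertex automorphism trick is addressing a non-issue. One simply declares $h(c_\Gamma)=c_\Delta$; together with Lemma~\ref{lemma_for_surhection1} this already makes $h$ onto $\Delta_c$. After these two corrections your argument is essentially the paper's. The paper packages the backward direction slightly differently, defining an abstract graph $K(G)$ on conjugacy classes of $S_5$-subgroups (with edges given by commuting involutions in their alternating halves) and proving $\Gamma\cong K(G(C_\Gamma))$; but the adjacency verification is the same as yours---retract to the two-block subgroup and invoke Lemma~\ref{lemma_Feng}---and your explicit parabolic-intersection step merely spells out what the paper leaves implicit in its appeal to that lemma.
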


	\begin{proof} 
 	For a pointed reflexive graph $\Gamma_c$, we let  $L(\Gamma) = G(C_{  \Gamma})		$, namely the Coxeter group given by the Coxeter graph associated with $\Gamma$.
 	
 	\smallskip \noindent 
	 Suppose $f: \Gamma_c \twoheadrightarrow  \Delta_c$ is an onto homomorphism of reflexive graphs. We  define the onto homomorphism  $L(f): L(\Gamma) \twoheadrightarrow L(\Delta)$.  If $f(s) = c_\Delta$, then, for $i = 1, ..., 4$,  send $s_i$  to $e$. Otherwise,   send  $s_i$   to $f(s)_i$.  The following shows that this assignment on the generators of  $L(\Gamma)$   describes  a homomorphism of groups (which is clearly onto). 
		 
\smallskip \noindent 	
		 
 		 \begin{claim} Each relator in the presentation of $G(C_\Gamma)$ is sent either to $e$ or  to a  relator in the presentation of $G(C_\Delta)$.  \end{claim}

\begin{claimproof}	
%
%
Firstly,  assume that   the relator is of type $(ab)^2$. If $L(f)$ sends one of $a,b$ to $e$, the image of the relator is  in the presentation of $G(C_\Delta)$ because it stipulates that generators are involutions.  Otherwise the image of the relator is there because $f$ preserves edges.  

\smallskip \noindent 
Secondly, assume the relator  is of form $(ab)^3$.  W.l.o.g.\ we have $a=s_i, b= s_{i+1}$ for some vertex $s$ of $\Gamma$ and some $ i$. Then either both $a,b$ and hence the relator are sent to $e$; or they are sent to $t_i, t_{i+1}$ where $f(s)= t$, in which case  the relator  is  in the presentation of $G(C_\Delta)$.  This shows the claim. 
		\end{claimproof}
		
\smallskip \noindent 
	For an object $G=L(\Gamma) $, let $K(G)$ be the graph with vertex set the conjugacy classes $[A]$  of subgroups $A$ of $G$ that  are isomorphic to $S_5$. There is an edge between  two vertices $[A], [B] \in K(G)$ if there are $C \in [A]$ and $D \in [B]$ and involutions  $c\in \mrm{Alt}(C)$ and $d\in \mrm{Alt}(D)$ such that $c$ commutes with $d$, where  $\mrm{Alt}(C)$ denotes the unique subgroup of $C$ isomorphic to~$A_5$.

\smallskip

		\begin{claim}   $\Gamma \cong K(G(C_\Gamma))  $ for each pointed reflexive graph $\Gamma_c$.	\end{claim}

			\begin{claimproof} Define a graph  isomorphism $\alpha \colon \Gamma \to K(G(C_\Gamma))$. Given a vertex  $v $ of $ \Gamma$ we define $\alpha(v) $ to be the conjugacy class of  the subgroup $\mrm{S}_5(v)$. By Lemma~\ref{non_conjugate_lemma},  $\alpha$  is injective,  and   by Lemma~\ref{char_of_the_Alt_5}, $\alpha$ is   surjective.  
				
\smallskip \noindent					It remains to show  $\alpha$ is a graph isomorphism.   Write $G= G(C_\Gamma)$.  Clearly adjacent vertices  in  $\Gamma$ are mapped to adjacent vertices  in $K(G)$. Assume for a contradiction  that distinct vertices  $s,t$  of $ \Gamma$ are not adjacent  but  their images are adjacent. Then  there are   $ A \in [A_5(s)]$ and  $B \in [A_5(t)]$ such that some involution $a\in A$ commutes with an involution  $b\in B$.  There is a retraction $\rho: G \rightarrow \langle L_4(s) \cup L_4(t) \rangle_G =: H$ (recall Definition~\ref{Alt_notation}) which sends all the generators in $C_\Gamma \setminus (L_4(s) \cup L_4(t))$ to $e$.    Within $H$,     $\rho(a)$ is in  a conjugate of $A_5(s)$ and $\rho(b)$ is in a conjugate of $A_5(t)$.   Modulo an inner automorphism we have that $\rho(a) \in A_5(s)$ but this leads to a contradiction, as under our assumption $\rho(a) $ commutes with $\rho(b)$, which cannot be the case by Lemma~\ref{lemma_Feng}.
		\end{claimproof}

\smallskip \noindent Now, let $K_c(G(C_\Gamma)) \cong \Gamma_c$ be the pointed graph obtained from the graph $K(G(C_\Gamma))$ by adding an extra element $c_\Gamma$ to $K(G(C_\Gamma))$ connected to every element of $K(G(C_\Gamma))$, and similarly for $\Delta$, so that $K_c(G(C_\Delta)) \cong \Delta_c$ with $c_\Delta$ added to $K(G(C_\Delta))$. 

\smallskip \noindent Next we define $$K(\varphi): K_c(G(C_\Gamma)) \rightarrow K_c(G(C_\Delta))$$
for  an onto homomorphism $\varphi: G(C_\Gamma) \twoheadrightarrow G(C_\Delta)$. Recall that the group  $A_5$ is  simple. So   $\varphi$ sends each subgroup  $A$ of $G(C_\Gamma)$   isomorphic to $S_5$  either to  a group of size $\leq 2$, or to a subgroup of $G(C_\Delta)$  isomorphic to $S_5$.   Given $x= [A]$, let   $K(\varphi)(x) = c_\Delta$ in the former case, and $[\varphi(A)] $ in the latter case. 

\smallskip \noindent
\begin{claim} $\Phi := K(\varphi) \colon K_c(G(C_\Gamma) ) \to K_c(G(C_\Delta) ) $ is an epimorphism of pointed reflexive graphs.
\end{claim}

	\begin{claimproof} $\Phi$  is surjective  by Lemma~\ref{lemma_for_surhection1}.	We show $\Phi$ preserves edges. Let $[A],[B]$ be vertices of $K(G(C_\Gamma) )$ and suppose that $\Phi([A]) \neq c_\Delta$ and $\Phi([B]) \neq c_\Delta$ (otherwise there is nothing to prove), and suppose   there is an edge between $[A] $ and $[B]$. By the foregoing claim, we may assume that $A = S_5(s)$, $B= S_5(t)$ where $\{s,t\}$   is an edge of $\Gamma$. Then $\hat A= \varphi(A)$ and $\hat B= \varphi(B)$ are  subgroups of $G(C_\Delta)$ isomorphic to $S_5$ such that there are two distinct involutions $a_1, a_3 \in \hat A$ and two distinct involutions $b_1, b_3 \in \hat B$ such that $\langle a_1, a_3, b_1, b_3 \rangle_{L(\Delta)}$ is abelian. By the foregoing claim, this means that  there is an edge in $K(G(C_\Delta) )$ between $[\hat A]$ and $[\hat B]$. 
	\end{claimproof}

\smallskip
\noindent This concludes the proof, since $\Gamma_c \cong K_c(G(C_\Gamma) )$ and $K_c(G(C_\Delta) ) \cong \Delta_c$.
\end{proof}

 We are finally in the position to obtain the main result:
 \begin{proof}[Proof of Theorem~\ref{th_homo_image}] Clearly  the map $\Gamma_c \mapsto \Gamma(C_\Gamma)$ is a Borel map from the space of countable pointed reflexive graphs to the space of  countable groups. The result now  follows from    Proposition~\ref{reduction} and  Theorem~\ref{the_hammer}.
 \end{proof}

\end{document}